\theoremstyle{plain}
\newtheorem{theorem}{Theorem}
\newtheorem{lemma}{Lemma}
\newtheorem{corollary}{Corollary}
\theoremstyle{definition}
\newtheorem{example}{Example}
\newtheorem{definition}{Definition}
\theoremstyle{remark}
 \numberwithin{equation}{section}
\DeclareMathOperator{\Zb}{\mathbb{Z}}
\newcommand{\abs}[1]{\left|#1\right|}
\newcommand{\norm}[1]{\left\|#1\right\|}
\begin{document}
\sloppy

\title[Unbounded Kobayashi hyperbolic domains in $\mathbb C^n$]{Unbounded Kobayashi hyperbolic domains in $\mathbb C^n$}

\author[H. Gaussier]{Herv\'e Gaussier%$^1$
}
\address{H. Gaussier: Univ. Grenoble Alpes, CNRS, IF, F-38000 Grenoble, France}
\email{herve.gaussier@univ-grenoble-alpes.fr}

\author[N. Shcherbina]{Nikolay Shcherbina%$^2$
}
\address{N. Shcherbina: Department of Mathematics, University of Wuppertal, 42119 Wuppertal, Germany}
\email{shcherbina@math.uni-wuppertal.de}

\date{\today}
\subjclass[2010]{32Q45, 32U05.}
\keywords{Kobayashi hyperbolicity, plurisubharmonic functions.}

%\thanks{$^1\,$Partially supported by ERC ALKAGE}
%\thanks{$^2$}

\maketitle

\begin{abstract}
We first give a sufficient condition, issued from pluripotential theory, for an unbounded domain in the complex Euclidean space $\mathbb C^n$ to be Kobayashi hyperbolic. Then, we construct an example of a rigid pseudoconvex domain in $\mathbb C^3$ that is Kobayashi hyperbolic and has a nonempty core. In particular, this domain is not biholomorphic to a bounded domain in $\mathbb C^3$ and the mentioned above sufficient condition for Kobayashi hyperbolicity is not necessary.
\end{abstract} 

\section*{Introduction}
According to the Riemann mapping theorem, every simply-connected domain in $\mathbb C$, different from $\mathbb C$, is biholomorphically equivalent to the unit disk $\Delta_1(0):=\{\lambda \in \mathbb C:\ |\lambda| < 1\}$. It is well known that this result has no direct generalization to higher dimension, since for instance every domain in $\mathbb C^n$ containing a nonconstant entire curve cannot be biholomorphic to a bounded domain in $\mathbb C^n$. There are different tools to distinguish domains, among which invariant metrics (under the action of biholomorphisms) play an important role. We recall that if $M$ is a complex manifold, $\Delta_r(0):=\{\lambda \in \mathbb C : |\lambda| < r\}$ for every $r > 0$ and $\mathcal H(\Delta_r(0),M)$ denotes the set of holomorphic maps from $\Delta_r(0)$ to $M$, then the Kobayashi pseudometric $k_M$ is defined on $TM$ by
$$
k_M(z;v):=\inf\{1/r > 0:\ \exists f \in \mathcal H(\Delta_r(0),M),\ f(0) = z, \ f'(0) = v\}.
$$

A complex manifold $M$ of complex dimension $n$ is {\sl Kobayashi hyperbolic} if for every point $p \in M$, there is a neighbourhood $U$ of $p$ in $M$ and a constant $c > 0$ such that $k_M(z,v) \geq c \norm{v}_g$ for every $z \in U$ and every $v \in T_zM$, where $\norm{\cdot}_g$ is any Hermitian norm on $U$ induced from $\mathbb C^n$. If $K_M$ denotes the inner distance induced by $k_M$, then $M$ is Kobayashi hyperbolic if $K_M$ is a distance on $M$. Notice that the topology induced by $K_M$ on $M$ is then equivalent to the natural topology of $M$. From the definition of $k_M$ we see that every bounded domain in $\mathbb C^n$ is Kobayashi hyperbolic, whereas a complex manifold containing a nonconstant entire curve is not Kobayashi hyperbolic. Since the Kobayashi metric is a biholomorphic invariant, it follows that a complex manifold that is not Kobayashi hyperbolic does not admit any bounded representation, i.e., is not biholomorphic to any bounded domain in $\mathbb C^n$. The first purpose of the paper is to give a sufficient condition from pluripotential theory for an unbounded domain to be Kobayashi hyperbolic.
For $r > 0$ and $z \in \mathbb C^n$, $n \ge 1$, we denote by $B^n_r(z)$ the Euclidean open ball centered at $z$ with radius $r$, i.e. $B^n_r(z):=\{w \in \mathbb C^n:\ \norm{w-z} < r\}$ where $\norm{\cdot}$ denotes the Euclidean norm in $\mathbb C^n$; in particular, $\Delta_r(z):=B^1_r(z)$. Finally, if $D$ is a domain in $\mathbb C^n$, we denote by $\partial D$ its Euclidean boundary.

\begin{definition}\label{antipeak-def}
Let $\Omega$ be an unbounded domain in $\mathbb C^n$. A bounded continuous positive plurisubharmonic (for short, psh.) function $\varphi$ on $\Omega$ will be called {\sl strong antipeak at infinity} for $\Omega$ if $\lim_{\norm{z} \rightarrow \infty} \varphi(z) = 0$.
\end{definition}

The first result of the paper is the following
\begin{theorem}\label{hyp-thm}
Let $\Omega$ be an unbounded domain in $\mathbb C^n$. If $\Omega$ has a strong antipeak function at infinity, then $\Omega$ is Kobayashi hyperbolic.
\end{theorem}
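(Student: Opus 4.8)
The plan is to localise the problem and then push holomorphic disks through the antipeak function. First I would note that it suffices to prove that $\Omega$ is hyperbolic \emph{at every point} $p$, i.e. that there are a neighbourhood $U$ of $p$ and $c>0$ with $k_\Omega(z;v)\geq c\|v\|$ for $z\in U$; this is exactly the definition of Kobayashi hyperbolicity. Fix $p$, choose $\rho>0$ with $\overline{B^n_{2\rho}(p)}\subset\Omega$, and put $a:=\varphi(p)>0$ and $B:=\sup_\Omega\varphi$; note $B>a$, since otherwise $\varphi$ attains its supremum and is constant by the maximum principle, contradicting $\varphi\to 0$ at infinity. Because $\lim_{\|z\|\to\infty}\varphi(z)=0$, every superlevel set $\{\varphi>t\}$ with $t>0$ is bounded; in particular $\{\varphi>a/2\}$ is a bounded open set containing $p$, so the connected component $G_p$ of $p$ in it is a bounded domain, hence Kobayashi hyperbolic, and $k_{G_p}(z;v)\geq c_0\|v\|$ for all $z$ in a fixed neighbourhood $V_0\subset G_p$ of $p$ and all $v$.

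Now take any $f\in\mathcal H(\Delta_r(0),\Omega)$ with $f(0)=z\in V_0$ and $f'(0)=v$; since $k_\Omega(z;v)=\inf\{1/r\}$ over such $f$, it is enough to bound $r\|v\|$ from above by a constant depending only on $p$. The function $\varphi\circ f$ is subharmonic on $\Delta_r(0)$ with $0<\varphi\circ f\leq B$ and $(\varphi\circ f)(0)=\varphi(z)$, which is close to $a$. Let $V$ be the connected component of $0$ in $f^{-1}(G_p)$: on $V$ the values of $f$ lie in $G_p$, so by the distance‑decreasing property $c_0\|v\|\leq k_{G_p}(z;v)\leq k_V(0;1)$, and everything is reduced to bounding $k_V(0;1)$ from above by $C/r$, i.e. to showing that a holomorphic disk started near $p$ cannot leave the bounded region $\{\varphi>a/2\}$ too quickly. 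For this I would combine two estimates. (i) On the part $\Gamma$ of $\partial V$ lying inside $\Delta_r(0)$ one has $\varphi\circ f=a/2$, while $\varphi\circ f\leq B$ throughout $\Delta_r(0)$; the two‑constants theorem applied on $V$ then forces the harmonic measure from $0$ in $V$ of $\Gamma$ to be at most $(B-\varphi(z))/(B-a/2)$, which is bounded away from $1$, so $0$ "sees" a definite proportion of the outer circle $\partial\Delta_r(0)$ through $V$. (ii) Jensen's formula for $\varphi\circ f$ on $\Delta_r(0)$ gives $\int_{\Delta_{r/e}(0)}f^{*}(dd^{c}\varphi)\leq B-\varphi(z)\leq B$; combining this with the fact that $\|w-p\|^{2}$ is \emph{strictly} plurisubharmonic, with Levi form a fixed positive multiple of $\omega_{0}$, on the fixed ball $B^n_{2\rho}(p)$, one controls the area $\int f^{*}\omega_{0}$ of the portion of the disk that stays near $p$, and via the sub‑mean value inequality for $\|f'\|^{2}$ this bounds $\|v\|$ on that regime. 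A Montel/reparametrisation argument then closes the loop: if no uniform bound held, one could extract from disks $f_j$ with $r_j\|f_j'(0)\|\to\infty$ a nonconstant entire curve $h\colon\mathbb C\to\overline\Omega$; if its image lies in $\Omega$ then $\varphi\circ h$ is a bounded nonconstant subharmonic function on $\mathbb C$, hence constant, which combined with the local estimates above and the decay $\varphi\to 0$ yields a contradiction.

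The step I expect to be the main obstacle is exactly this "no fast escape" claim, namely turning the soft information "$\varphi\circ f$ is bounded subharmonic with a definite central value" into the quantitative statement that $V$ is not too thin at $0$. A general bounded subharmonic function on $\Delta_r(0)$ can dip below $a/2$ arbitrarily close to the origin, and a domain $V$ can be a thin slit domain for which $k_V(0;1)$ is huge even though $0$ still sees much of the outer circle, so neither (i) nor a crude comparison with a big ball suffices by itself. One must genuinely use that $\varphi\circ f$ is the pullback of the \emph{fixed} function $\varphi$ together with the local strict plurisubharmonicity near $p$, and the delicate point is to make the Jensen mass bound $\int f^{*}(dd^{c}\varphi)\leq B$ interact with the ambient Euclidean form $\omega_{0}$ on the fixed ball; getting the constants in (i) and (ii) to combine into a single lower bound for the "escape radius" is where the real work lies.
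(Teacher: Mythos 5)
Your reduction is sound as far as it goes, and its first half tracks the paper's proof closely: the paper also works by contradiction with disks $f_k\colon\Delta_k(0)\to\Omega$, $\|f_k'(0)\|=1$, $f_k(0)\to p$, isolates the connected component $U_{k,R}$ of $f_k^{-1}(f_k(\Delta_k(0))\cap B^n_R(0))$ containing $0$ (the analogue of your $V$, except cut by a Euclidean ball rather than by a superlevel set of $\varphi$), and uses exactly your estimate (i): the two-constants/mean-value inequality with $\varphi\le c_R:=\sup_{\mathbb C^n\setminus B^n_R(0)}\varphi\to 0$ on the inner boundary and $\varphi\le C$ on the outer arc shows that the harmonic measure at $0$ of the outer arc $\overline U_{k,R}\cap\partial\Delta_k(0)$ cannot tend to $0$. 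But the step you flag as the obstacle is a genuine gap, and your proposed item (ii) does not fill it. A Jensen-type bound on $\int f^*(dd^c\varphi)$ gives you nothing here: $\varphi$ is only assumed continuous and plurisubharmonic, so $dd^c\varphi$ carries no lower bound relating it to $\omega_0$, and the strict plurisubharmonicity of $\|w-p\|^2$ controls the area of the portion of the disk near $p$ but not the derivative $\|v\|$ nor the rate at which the disk can escape. The final Montel/Brody step also fails as stated: the maps $f_j$ take values in an unbounded domain, so no locally bounded subfamily (hence no convergent reparametrized subsequence to an entire curve in $\overline\Omega$) is available without first proving precisely the localization you are trying to establish.

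What actually closes the argument in the paper is a Beurling-type estimate (Lemma 3.4 of \cite{Sh93}): after showing that $U_{k,R}$ is \emph{simply connected} (via a tangency-to-spheres argument --- note that for your $V$, cut by the superlevel set $\{\varphi>a/2\}$ rather than by a ball, simple connectivity is not clear, since $\varphi\circ f$ may perfectly well have interior regions where it dips below $a/2$), one has
$$
\rho(0,\partial U_{k,R})\;\ge\;\frac{\pi^2 k}{16}\,\bigl(\omega(0,\overline U_{k,R}\cap\partial\Delta_k(0),U_{k,R})\bigr)^2 .
$$
This is the missing quantitative link: it answers your ``thin slit'' worry because the inradius is bounded below by the \emph{square} of the harmonic measure of the outer arc times $k$. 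The proof then splits into a dichotomy. If the harmonic measure of the outer arc stays bounded below along a subsequence, the estimate forces $f_{k}(\Delta_{c^*k}(0))\subset B^n_R(0)$ with $\|f_k'(0)\|=1$, contradicting the hyperbolicity of the ball; if it tends to $0$ for every $R$, the two-constants inequality gives $\alpha\le C\varepsilon+c_R(1-\varepsilon)<\alpha$, a contradiction. So your instinct about where the real work lies is correct; the tool you need is the harmonic-measure-versus-distance estimate for simply connected subdomains of $\Delta_k(0)$, not a mass bound on $f^*(dd^c\varphi)$.
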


The next statement is a direct consequence of Theorem~\ref{hyp-thm}:

\begin{corollary}\label{holom-antipeak} Let $\Omega$ be an unbounded domain in $\mathbb C^n$. If there is a bounded holomorphic function on $\Omega$ that never
takes zero value and decays to 0 as $\|z\| \to \infty$, then $\Omega$ is Kobayashi hyperbolic.
\end{corollary}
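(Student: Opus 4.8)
The plan is to deduce the corollary immediately from Theorem~\ref{hyp-thm} by manufacturing a strong antipeak function at infinity out of the given holomorphic function. Let $f$ be a bounded holomorphic function on $\Omega$ with $f(z) \neq 0$ for all $z \in \Omega$ and $|f(z)| \to 0$ as $\|z\| \to \infty$. I would simply set
$$
\varphi := |f| \quad \text{on } \Omega,
$$
and claim that $\varphi$ satisfies all the hypotheses of Definition~\ref{antipeak-def}.

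The verification is routine and proceeds point by point. First, $\varphi$ is continuous, being the modulus of a continuous function. Second, it is bounded on $\Omega$ because $f$ is. Third, it is strictly positive on $\Omega$ precisely because $f$ never vanishes there, so $\varphi(z) = |f(z)| > 0$ for every $z \in \Omega$. Fourth, $\varphi$ is plurisubharmonic: since $f$ is holomorphic and nowhere zero, $\log|f|$ is pluriharmonic on $\Omega$, and hence $\varphi = |f| = \exp(\log|f|)$ is psh as the composition of the convex increasing function $t \mapsto e^t$ with a psh function (alternatively, one invokes directly the standard fact that the modulus of a holomorphic function is psh). Finally, $\lim_{\|z\| \to \infty} \varphi(z) = \lim_{\|z\| \to \infty} |f(z)| = 0$ by hypothesis. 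Thus $\varphi$ is a strong antipeak function at infinity for $\Omega$, and Theorem~\ref{hyp-thm} yields that $\Omega$ is Kobayashi hyperbolic.

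There is essentially no obstacle here: the corollary is a direct specialization of Theorem~\ref{hyp-thm}, the only substantive point being the elementary observation that a nowhere-vanishing (or indeed arbitrary) bounded holomorphic function has psh modulus, so that the decay hypothesis on $f$ transfers verbatim to the decay hypothesis on $\varphi$ in Definition~\ref{antipeak-def}.
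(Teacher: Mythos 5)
Your proof is correct and is exactly the intended derivation: the paper states the corollary as a direct consequence of Theorem~\ref{hyp-thm}, with $|f|$ (bounded, continuous, psh, positive because $f$ is nowhere zero, and decaying at infinity) serving as the strong antipeak function. Nothing further is needed.
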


Notice that the assumption of boundedness in the last two statements is essential. This can be seen from the following example.

\begin{example}\label{boundedness}
For $\varepsilon > 0$, let $\Omega_\varepsilon := \{(z,w) \in \mathbb C^2 : |w| < |z|, |z| > \varepsilon\}$. Then $\varphi: z \mapsto 1/|z|$ is a positive plurisubharmonic function on $\Omega_\varepsilon$ such that $\varphi (z) \to 0$ as $|z|^2 + |w|^2 \to \infty$. By Theorem \ref{hyp-thm}, 	$\Omega_\varepsilon$ is Kobayashi hyperbolic when $\varepsilon > 0$, but $\Omega_0$ is not because it contains the punctured line $\{(z,w) \in \mathbb C^2 : w = 0, |z|>0\}$.
\end{example}

Note that not every Kobayashi hyperbolic domain admits a bounded representation. One of the obstructions for the existence of such representations was introduced and studied by  T.Harz, N.Shcherbina and G.Tomassini (see ~\cite{HST17, HST18, HST19}) and later also by N.Shcherbina and E.Poletsky \cite{PS19}. It was named the core of a domain and can be defined as follows.

\begin{definition}
Let $\Omega$ be an unbounded domain in $\mathbb C^n$. The core $\mathfrak c(\Omega)$ is defined by

\begin{align*}
\mathfrak c(\Omega):= \{z \in \Omega:\  {\rm every} \ {\rm bounded} \ {\rm continuous} \ {\rm plurisubharmonic}\ {\rm function} \\
\ {\rm on} \ {\Omega} \  {\rm fails}\ {\rm to}\ {\rm be}\ {\rm smooth}\ {\rm and}\ {\rm strictly}\ {\rm plurisubharmonic}\ {\rm near}\ z\}.
\end{align*}
\end{definition}

Since the function $z \mapsto \norm{z}^2$ is strictly plurisubharmonic in $\mathbb C^n$, every bounded domain in $\mathbb C^n$ has an empty core. It follows from the biholomorphic invariance of the core that an unbounded domain with a nonempty core will not admit any bounded representation. For instance, in \cite[Theorem~1.2]{HST12}, the authors construct for every $n \geq 2$ an unbounded strictly pseudoconvex domain $\Omega \subset \mathbb C^n$ with smooth boundary such that $\mathfrak c(\Omega)$ is not empty and contains no analytic variety of positive dimension. Another surprising example of a strictly pseudoconvex domain in $\mathbb C^2$ with smooth boundary and nonempty core which is Kobayashi and Bergman complete, but has no nonconstant holomorphic functions, was constructed recently in \cite{SZ19}. 
As pointed out by the referee, the seminal paper \cite{Gr77} presents examples of domains in $\mathbb C^n$ which are Kobayashi hyperbolic, but do not have any bounded representation. Indeed, if $D$ denotes the complement of $(2n+1)$ hyperplanes in general position in $\mathbb C \mathbb P^n$, then $D$ is Kobayashi hyperbolic according to \cite{Gr77}. Moreover, we may assume that one of the hyperplanes is the hyperplane at infinity, and hence that $D$ is contained  in $\mathbb C^n$. If there were a biholomorphism $\Phi$ from $D$  to a bounded domain $\Omega$ in $\mathbb C^n$, then the hyperplanes would be removable singularities for $\Phi$, since $\Phi$ would be bounded in a neighborhood of the hyperplanes. Hence $\Phi$ would extend to $\mathbb C^n$ as a bounded map. Then it would be constant according to the Liouville Theorem.

The second goal of the present paper is to construct an unbounded pseudoconvex domain in $\mathbb C^3$, whose boundary is globally defined by a graph, which is Kobayashi hyperbolic and has a nonempty core. More precisely, we say that a domain $\Omega \subset \mathbb C^n$ is rigid if there exists a function $\Psi$ defined in $\mathbb C^{n-1}$ such that
$$
\Omega=\{(z,\zeta) \in \mathbb C^{n-1} \times \mathbb C: {\rm Re}(\zeta) > \Psi(z)\}.
$$
The domain $\Omega$ is pseudoconvex if and only if the function $\Psi$ is plurisubharmonic in $\mathbb C^{n-1}$. Rigid domains appear naturally as local models for pseudoconvex domains and reflect the geometry of such domains at some boundary points. For instance, the strictly pseudoconvex domain $\Omega:=\{(\zeta,z) \in \mathbb C^n:{\rm Re}(\zeta) > \|z\|^2\}$, unbounded representation of the unit ball in $\mathbb C^n$, is  a local model for domains in $\mathbb C^n$ near every strictly pseudoconvex boundary point. Likewise, if $D \subset \mathbb C^2$ is a bounded domain with smooth boundary of finite D'Angelo type $2m$ at $p \in \partial D$ (see~\cite{DAN79} for the definition of the D'Angelo type), then there are a neighbourhood $U$ of $p$ in $\mathbb C^2$ and holomorphic coordinates $(\zeta,z)$ defined on $U$ such that
$$
D \cap U=\{(z,\zeta) \in U:{\rm Re}(\zeta) > H(z)+ \phi(z,\zeta)\},
$$
where $H$ is a subharmonic homogeneous polynomial of degree $2m$ which is not harmonic and $|\phi(z,\zeta)| \leq c(|\zeta|^2 + |\zeta| |z| + |z|^{2m+1})$ on $U$. Notice that if $\Omega_H:=\{(z,\zeta) \in \mathbb C^2:{\rm Re}(\zeta) > H(z)\}$, then the metric space $(\Omega_H,K_{\Omega_H})$ is complete. Indeed, since $H$ is homogeneous, there is a sequence of automorphisms of $\Omega_H$ that accumulates at the origin. Moreover, according to the {\sc Main Theorem} in~\cite{BF78}, there is a global holomorphic peak function at the origin for $\Omega_H$, i.e. a holomorphic function $f$ from $\Omega_H$ to $\Delta_1(0)$, continuous on $\overline{\Omega}_H$, such that $f(0) = 1$ and for every bounded open neighbourhood $U$ of the origin in $\mathbb C^2$, $\sup_{\overline{\Omega} \setminus U}|f| < 1$. Notice that, by construction, $f(\Omega_H) \subset \Delta_1(0) \setminus \{0\}$ and $\lim_{\norm{p} \rightarrow \infty} f(p) = 0$. The completeness of the metric space $(\Omega_H,K_{\Omega_H})$ follows now from Proposition 3.1.4 in~\cite{G99}.

\vspace{1mm}
Observe, moreover, that $\Omega_H$ has an empty core. Indeed, assume to get a contradiction that $\mathfrak{c}(\Omega_H) \neq \emptyset$. Since $\Omega_H$ is a pseudoconvex domain of finite type in $\mathbb C^2$, it admits a local holomorphic peak function at each boundary point. It follows then that $\overline{\mathfrak{c}(\Omega_H)} \cap \partial \Omega_H = \emptyset$. Moreover, we know by Theorem II in \cite{PS19} (see also Theorem 3.3 in \cite{Sl19}) that the set $\mathfrak{c}(\Omega_H)$ is the disjoint union of  some closed sets $E_j, j \in J$, that are 1-pseudoconcave in the sense of Rothstein and have the following Liouville property: every bounded continuous psh. function on $\Omega$ is constant on each of $E_j$. Let $E_{j_0}$ be one of the sets in the decomposition above. Then, in view of the 1-pseudoconcavity of $E_{j_0}$, $E_{j_0}$ is unbounded. Since $|f|^2$ is a bounded continuous psh. function on $\Omega_H$, the restriction of $|f|^2$ to $E_{j_0}$ is constant. Hence, it follows from the fact that $\lim_{\norm{p} \rightarrow \infty} f(p) = 0$ that $f$ vanishes identically on $E_{j_0}$. This is a contradiction since $f$ does not vanish on $\Omega_H$.

\vspace{2mm}
It was proved in a recent paper \cite{Sh19} that the existence of the Kobayashi and the Bergman metrics for pseudoconvex domains in $\mathbb C^2$ of more general form
$$
\Omega_H:=\{(z,\zeta) \in \mathbb C^2:{\rm Re}(\zeta) > H(z, {\rm Im}(\zeta))\}, 
$$
with $H$ being just a continuous function on $\mathbb C \times \mathbb R$, is equivalent to the fact that the core $\mathfrak c(\Omega_H)$ of $\Omega_H$ is empty.

The second result of the present paper shows that this kind of relations does not hold in the case of higher dimensions.

\begin{theorem}\label{main-thm}
There exists a nonnegative plurisubharmonic function $\Psi$ in $\mathbb C^2$ such that the rigid domain
$$
\Omega_{\Psi}:=\{(z,w,\zeta) \in \mathbb C^3:{\rm Re}(\zeta) > \Psi(z,w)\}
$$
is Kobayashi hyperbolic and has a nonempty core.
In particular, the domain $\Omega_{\Psi}$ is not biholomorphic to a bounded domain.
\end{theorem}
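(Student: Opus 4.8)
The plan is to split the argument into four tasks: the construction of the psh function $\Psi$, the verification that $\Omega_\Psi$ is Kobayashi hyperbolic, the verification that $\mathfrak c(\Omega_\Psi)\ne\emptyset$, and the deduction that $\Omega_\Psi$ has no bounded representation. The construction of $\Psi$ is the heart of the matter, and its two requirements pull against each other. A nonempty core forces every bounded continuous psh function on $\Omega_\Psi$ to degenerate near some point, and in a rigid domain the most direct way to produce such a degeneracy is to let $\Psi$ be constant along a complex line of $\mathbb C^2$ — but then $\Omega_\Psi$ contains that line translated in $\zeta$, hence a nonconstant entire curve, and hyperbolicity fails. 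Moreover one cannot shortcut hyperbolicity via Theorem~\ref{hyp-thm}: if $\Omega_\Psi$ had a strong antipeak function $\varphi$ at infinity, then, being a bounded continuous psh function, $\varphi$ would be constant on each component $E_j$ of $\mathfrak c(\Omega_\Psi)$ by the Liouville property of \cite[Theorem~II]{PS19}; since $E_j$ is $1$-pseudoconcave, hence unbounded, the condition $\lim_{\norm{z}\to\infty}\varphi(z)=0$ would force $\varphi\equiv 0$ on $E_j$, contradicting $\varphi>0$. In particular $\Psi$ must not tend to $+\infty$ at infinity, for otherwise $(z,w,\zeta)\mapsto |1+\zeta|^{-2}$ would be such a function on $\Omega_\Psi$ (it is bounded by $1$, continuous, positive, psh as the squared modulus of the nonvanishing holomorphic $1/(1+\zeta)$, and tends to $0$ at infinity because $\Psi\to+\infty$ forces $|\zeta|\to\infty$ along $\Omega_\Psi$). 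So $\Psi$ must be built, in the spirit of \cite{HST12}, with $\Psi\ge 0$ psh on $\mathbb C^2$, with restriction unbounded above on every complex line (so that $\Omega_\Psi$ contains no nonconstant entire curve), yet with a persistent, controlled rank drop of $i\partial\bar\partial\Psi$ along an unbounded locus, engineered so that $\Omega_\Psi$ contains an unbounded closed non-analytic set $E$ that is $1$-pseudoconcave in the sense of Rothstein and carries the Liouville property.

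Granting such a $\Psi$, hyperbolicity of $\Omega_\Psi$ is then checked by a direct argument. The plan is to exhibit, near every point $p\in\Omega_\Psi$, a bounded continuous psh function on $\Omega_\Psi$ whose complex Hessian is positive definite on a fixed neighbourhood of $p$, and to conclude by Sibony's local criterion that $k_{\Omega_\Psi}$ is bounded below near $p$. The function $e^{\Psi-\Real\zeta}$ (valued in $(0,1)$ on $\Omega_\Psi$, psh since $\Psi-\Real\zeta$ is) is strictly psh in the $\zeta$-direction everywhere and, where $\Psi$ is strict modulo its rank-one correction, strictly psh in the remaining directions as well; near a point of the degeneracy locus of $\Psi$ one adds a further bounded psh function recovering the missing direction, using that the fibres of $\Omega_\Psi$ in that direction are proper subsets of $\mathbb C$ (because $\Psi$ is unbounded along them) and hence support bounded strictly subharmonic functions that can be patched in globally.

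The nonemptiness of the core is the most delicate point. The plan is to show that along the set $E$ produced in the construction, every bounded continuous psh function on $\Omega_\Psi$ is constant; since $E$ is $1$-pseudoconcave it then lies in $\mathfrak c(\Omega_\Psi)$ (a function constant on $E$ cannot be smooth and strictly psh at a point of $E$, the complex-tangential directions of $E$ obstructing it), so $\mathfrak c(\Omega_\Psi)\ne\emptyset$. The key and hardest step is a quantitative estimate, of Hopf-lemma / Hartogs-figure type, exploiting the thinness of $\Omega_\Psi$ in a complex direction near $E$ towards infinity, showing that a bounded continuous psh function cannot grow transversally to $E$; combined with the structure theorem \cite[Theorem~II]{PS19} (see also \cite{Sl19}) this yields the required Liouville property. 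I expect this last estimate to be the main obstacle of the proof.

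Finally, once $\mathfrak c(\Omega_\Psi)\ne\emptyset$ is established, the remaining assertion is immediate: the core is a biholomorphic invariant, and every bounded domain in $\mathbb C^3$ has empty core because $z\mapsto\norm{z}^2$ is bounded and strictly psh on it; hence $\Omega_\Psi$ admits no bounded representation.
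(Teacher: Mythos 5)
Your analysis of the constraints is largely sound (the choice of a Wermer-type degeneracy locus, the observation that Theorem~\ref{hyp-thm} cannot be invoked because a strong antipeak function would have to vanish on the unbounded $1$-pseudoconcave pieces of the core, the reduction of the final claim to biholomorphic invariance of the core), but the strategy you propose for Kobayashi hyperbolicity is self-defeating. You plan to exhibit, near \emph{every} point $p\in\Omega_\Psi$, a bounded continuous psh function on $\Omega_\Psi$ that is smooth and strictly psh on a neighbourhood of $p$, and to conclude by Sibony's criterion. By the very definition of the core, the existence of such a function at $p$ means precisely that $p\notin\mathfrak c(\Omega_\Psi)$; carrying out your plan at every point would therefore prove $\mathfrak c(\Omega_\Psi)=\emptyset$ and contradict the other half of the theorem. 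The patching step you describe near the degeneracy locus (``adding a further bounded psh function recovering the missing direction'') is exactly what must fail there: the locus is a Wermer-type set $\mathcal E$ which locally is a Cantor family of holomorphic graphs $\{w=f_\lambda(z)\}$, every bounded continuous psh function on $\Omega_\Psi$ is constant on $\mathcal E\times\{1\}$ by the Liouville property, and a function constant on an analytic disk cannot be strictly psh at its points. So no pointwise strict-plurisubharmonicity criterion can establish hyperbolicity here; the whole point of the example is that hyperbolicity and the existence of bounded strictly psh exhaustion-type data are decoupled in dimension $3$.

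The paper's hyperbolicity argument is instead global and runs by contradiction. Given maps $f_k=(z_k,w_k,\zeta_k):\Delta_k(0)\to\Omega_\Psi$ with $\norm{f_k'(0)}=1$ and $f_k(0)$ in a fixed compact set, the Schwarz lemma for the half-plane forces $|\zeta_k'(0)|=O(1/k)$, so the derivative concentrates in the $(z,w)$-directions; Harnack's inequality (Lemma~\ref{loc-lem}) then confines $\pi_{z,w}(f_k(\Delta_{k/3}(0)))$ to a fixed sublevel set $F_d=\{\Psi<2d\}$. The convex weight $\rho$ is engineered so that, outside a compact set, $F_d$ lies in prescribed thin tubes $\mathcal E^{\kappa(n)}$, resp.\ $\mathcal E^{\delta(n)}$, around the Wermer set. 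Bloch's theorem extracts from these maps arbitrarily large holomorphic graphs over the $z$-axis or over the $w$-axis inside $F_d$, and the two cases are excluded by two quantitative properties of $\mathcal E$ built into the choice of the $\varepsilon_n$: the monodromy (``shift error'') of the square-root branches around the points $a_p$, which forbids large graphs over $z$ in thin tubes (Property $(\mathcal P)$), and a transversality/diameter estimate for nearly vertical disks (Lemma~\ref{vert-lem}), which forbids large graphs over $w$. By contrast, the core part, which you flagged as the main obstacle and for which you proposed a new Hopf-lemma-type estimate, is in the paper a direct citation of the Liouville property of $\mathcal E$ already established in \cite{HST12, HST18}. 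You would need to replace your hyperbolicity section entirely by an argument of this kind.
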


The following corollary shows that the existence of a  strong antipeak function at infinity is not a necessary condition for an unbounded domain to be Kobayashi hyperbolic. Indeed, from the construction of $\Omega_{\Psi}$ in Theorem~\ref{main-thm}, we have
\begin{corollary}\label{main-cor}
The domain $\Omega_{\Psi}$ does not admit any strong antipeak function at infinity.
\end{corollary}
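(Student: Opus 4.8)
The plan is to combine the nonemptiness of the core $\mathfrak{c}(\Omega_\Psi)$, granted by Theorem~\ref{main-thm}, with the structural description of the core from \cite{PS19}, and then to exploit the two defining features of a strong antipeak function: positivity and decay to $0$ at infinity.

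First I would recall, exactly as in the discussion of $\Omega_H$ above, that by Theorem~II in \cite{PS19} the core decomposes as a disjoint union $\mathfrak{c}(\Omega_\Psi) = \bigsqcup_{j \in J} E_j$ of closed sets $E_j$ that are 1-pseudoconcave in the sense of Rothstein and that enjoy the Liouville property: every bounded continuous psh.\ function on $\Omega_\Psi$ is constant on each $E_j$. Since $\mathfrak{c}(\Omega_\Psi) \ne \emptyset$, fix $j_0 \in J$ with $E_{j_0} \ne \emptyset$. As already observed for $\Omega_H$, the 1-pseudoconcavity of $E_{j_0}$ forces it to be unbounded: otherwise the strictly psh.\ function $z \mapsto \|z\|^2$ would attain a maximum on the compact set $E_{j_0}$, which is incompatible with 1-pseudoconcavity.

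Then I would argue by contradiction. Suppose $\varphi$ is a strong antipeak function at infinity for $\Omega_\Psi$. By Definition~\ref{antipeak-def}, $\varphi$ is a bounded continuous psh.\ function on $\Omega_\Psi$, hence, by the Liouville property above, $\varphi$ is constant on $E_{j_0}$, say $\varphi \equiv c$ there. Picking a sequence $p_k \in E_{j_0}$ with $\|p_k\| \to \infty$ — which exists since $E_{j_0}$ is unbounded — and using $\lim_{\|z\| \to \infty} \varphi(z) = 0$, we get $c = \lim_k \varphi(p_k) = 0$. But $\varphi$ is positive everywhere on $\Omega_\Psi \supset E_{j_0}$, so $c > 0$, a contradiction. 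Hence $\Omega_\Psi$ admits no strong antipeak function at infinity.

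Since all the needed ingredients are already in place — Theorem~\ref{main-thm} provides the nonempty core and \cite{PS19} provides its structure — there is no genuine obstacle in this argument; the only slightly delicate point is the unboundedness of the components $E_j$, which is precisely the maximum-principle observation used above in the analysis of $\Omega_H$.
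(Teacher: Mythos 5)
Your overall strategy --- find an unbounded subset of $\Omega_\Psi$ with the Liouville property, force $\varphi$ to be constant on it, use the decay at infinity to make that constant $0$, and contradict positivity --- is exactly the mechanism of the paper's proof. The paper, however, does not route this through the core decomposition of \cite{PS19} at all: it works directly with the explicitly constructed Wermer type set $\mathcal E\times\{1\}\subset\Omega_{\Psi}$, which is unbounded by construction and has the Liouville property by the argument of \cite[Theorem~2.2]{HST18} already invoked in Lemma~\ref{core-lem}. This difference matters, because the one step of your argument that is not justified is precisely the one the paper's choice of set renders trivial.

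The gap is your claim that $1$-pseudoconcavity forces $E_{j_0}$ to be unbounded ``since otherwise $\|z\|^2$ would attain a maximum on the compact set $E_{j_0}$''. The sets $E_j$ of \cite{PS19} are closed in $\Omega_\Psi$, not in $\mathbb C^3$; a bounded such set need not be compact, since its closure may accumulate on $\partial\Omega_\Psi$, and then the supremum of $\|z\|^2$ need not be attained on $E_{j_0}$ and no contradiction with $1$-pseudoconcavity arises. This is exactly why, in the two places where the paper runs this argument (the discussion of $\Omega_H$ in the Introduction and Remark~1), it first proves $\overline{\mathfrak{c}(\Omega)}\cap\partial\Omega=\emptyset$ using local holomorphic peak functions at every boundary point, available there because $\Omega_H$ is of finite type, respectively $\Omega$ is strictly pseudoconvex. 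For $\Omega_\Psi$ no such peak functions are available: $\Psi(\rho)$ vanishes on $\mathcal E$, so the boundary is far from strictly pseudoconvex over that set, and you have no a priori control keeping the core components away from $\partial\Omega_\Psi$. The repair is easy and brings you back to the paper's proof: the component $E_{j_0}$ containing a point of $\mathcal E\times\{1\}$ must contain all of $\mathcal E\times\{1\}$ (since, by Lemma~\ref{core-lem}, every bounded continuous psh.\ function on $\Omega_\Psi$ is constant on $\mathcal E\times\{1\}$), and $\mathcal E\times\{1\}$ is unbounded by construction; or, more simply, drop \cite{PS19} entirely and apply your final three sentences to $\mathcal E\times\{1\}$ itself.
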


The paper is organized as follows. In Section~\ref{hyp-sect} we prove Theorem~\ref{hyp-thm}. In Section~\ref{wermer-sect} we construct explicitly the function $\Psi$ used in Theorem~\ref{main-thm} and a Wermer type set contained in $\Omega_{\Psi}$. Finally, in Section~\ref{main-sect} we prove Theorem~\ref{main-thm} and Corollary~\ref{main-cor}.

\vspace{2mm}
{\sl Acknowledgments.} The authors wish to thank the anonymous referee for precious comments and suggestions, which improved the content of the paper. In particular, Corollary \ref{holom-antipeak} and Example \ref{boundedness} were proposed by the referee. Lemma \ref{loc-lem} and its proof were also modified following the referee's suggestion, as well as different other points all along the paper.

Part of this work was done while the second author was a visitor at the Capital Normal University (Beijing). It is his pleasure to thank this institution for its hospitality and good working conditions. The authors also would like to thank Fusheng Deng for his remark related to the definition of the antipeak function which slightly strengthen the statement of Theorem 1.

\section{Proof of Theorem~\ref{hyp-thm}}\label{hyp-sect}

\vskip 0,2cm
We first notice that a domain $\Omega \subset \mathbb C^n$ is Kobayashi hyperbolic if and only if it satisfies the following condition: 
 
\begin{equation}\label{hyp-eq} 
\forall p \in \Omega,\ \exists r > 0, \exists c > 0 / \ \forall q \in B^n_r(p), \ \forall v \in \mathbb C^n,\ k_\Omega(q,v) \geq c \|v\|,
\end{equation} 
where $\|v\|$ denotes the Euclidean norm in $\mathbb C^n$.

Let now $\Omega$ be a domain satisfying the assumptions of Theorem~\ref{hyp-thm}. Assume, to get a contradiction, that $\Omega$ is not Kobayashi hyperbolic. It follows from (\ref{hyp-eq}) that there is a point $p \in \Omega$ and for every positive integer $k$ there is a holomorphic map $f_k: \Delta_k(0) \rightarrow \Omega$ such that $\|f'_k(0)\|=1$ and the sequence $\{f_k(0)\}_k$ converges to $p$ when $k$ goes to infinity.
Moreover, we may assume that $f_k$ is continuous up to $\partial \Delta_k(0)$. 
 
Denote by $\varphi$ a psh. function on $\Omega$ that is strong antipeak at infinity. Let $C > 0$ be a constant which bounds the function $\varphi$ from above, i.e. $\varphi < C$ on $\Omega$. Observe that, in view of the continuity of $\varphi$, there is a positive constant $\alpha$ such for sufficiently large $k$ we have: 
$$ 
\varphi(f_k(0)) \geq \alpha. 
$$ 
 
For each $R > 0$ we let $c_R:=\sup_{\mathbb C^n \backslash {B^n_R(0)}}\varphi$. Notice that, by Definition~\ref{antipeak-def}, we get: $\lim_{R \rightarrow \infty}c_R = 0$. Since the Euclidean ball $B^n_R(0)$ is Kobayashi hyperbolic, it follows that for every sufficiently large positive integer $k$ we have: 
$$ 
f_{k}(\partial \Delta_k(0)) \cap \left(\mathbb C^n\backslash \overline{B^n_R(0)}\right) \neq \emptyset. 
$$ 
 
Let $U_{k,R}$ denote the connected component of $f_k^{-1}(f_k(\Delta_k(0)) \cap B^n_R(0))$ containing the origin.

\vskip 0,2cm
\noindent{\bf {Claim 1.}} For each $k \in \mathbb N$ and $R>0$, the domain $U_{k,R}$ is simply connected.

\vskip 0,2cm
Indeed, if $\partial U_{k,R}$ has at least two components, then, after maybe substituting $R$ with a generic value $\tilde{R} < R$, there is a disc $V_{k,R}$, contained in $\Delta_k(0)$, such that $f_k(V_{k,R}) \subset \Omega \backslash B^n_R(0)$ and such that $f_k(\partial V_{k,R}) \subset \partial B^n_R(0)$. Let $R'=\sup_{V_{k,R}}|f_k|$. Then the complex disc $f_k(V_{k,R})$ is tangent to $\partial B^n_{R'}(0)$ from inside which is not possible. This proves Claim 1.

Hence $U_{k,R}$ is bounded by a piecewise smooth Jordan curve for generic values of $R$. We denote by $\Phi$ a Riemann map from $\Delta_1(0)$ to $U_{k,R}$ such that $\Phi(0)=0$.
According to the Carath{\'e}odory Theorem, $\Phi$ extends as a homeomorphism between $\partial \Delta_1(0)$ and $\partial U_{k,R}$.
 
\vskip 0,2cm
\noindent{\bf Claim 2.} There is $R_0 > 0$ such that for each $k \in \mathbb N$ and each $R > R_0$ one has $\overline{U}_{k,R} \cap \partial \Delta_k(0) \neq \emptyset$.

\vskip 0,2cm
Indeed, assume to get a contradition that $\partial U_{k,R}$ is a closed curve contained in $\Delta_k(0)$. Then $f_k(\Phi(e^{i\theta})) \in \partial B^n_R(0)$ for all $\theta \in [0,2\pi]$ and from the Mean Value Inequality
\begin{equation}\label{mv-eq} 
(\varphi \circ f_k \circ \Phi)(0) \leq \frac{1}{2\pi}\int_0^{2\pi} \varphi(f_k(\Phi(e^{i\theta})))d\theta
\end{equation} 

\noindent 
we get the inequality
$$
\alpha \leq c_R.
$$
If we choose $R_0$ so large that $c_{R_0} < \alpha$, then we get a contradiction. This proves Claim 2.

\vskip 0,2cm
Let $\omega(0,\overline{U}_{k,R} \cap \partial \Delta_k(0), U_{k,R})$ denote the harmonic measure of $\overline{U}_{k,R} \cap \partial \Delta_k(0)$ at the point $0$ with respect to the domain $U_{k,R}$. We recall that if $D$ is a bounded domain in $\mathbb C$, $p \in D$ and $E$ is a Borel set in $\partial D$, then $\omega(p,E,D)$ denotes the harmonic measure given by the value at $p$ of the solution to the Dirichlet problem on $D$, whose boundary value on $\partial D$ is equal to the characteristic function of $E$. Then we have
$$
\omega(0,\overline{U}_{k,R} \cap \partial \Delta_k(0), U_{k,R}) =\frac{l(\Phi^{-1}(\overline{U}_{k,R} \cap \partial \Delta_k(0)))}{2\pi},
$$
where $l(\Phi^{-1}(\overline{U}_{k,R} \cap \partial \Delta_k(0)))$ denotes the length of the set $\Phi^{-1}(\overline{U}_{k,R} \cap \partial \Delta_k(0))$. Since $U_{k,R}$ is simply connected and contained in $\Delta_k(0)$, it follows from Lemma 3.4 of \cite{Sh93} that the Euclidean distance $\rho(0,\partial U_{k,R})$ from $0$ to the boundary of $U_{k,R}$ satisfies:

$$\rho(0,\partial U_{k,R}) \geq \frac{\pi^2 k}{16}\left(\omega(0,\overline{U}_{k,R} \cap \partial \Delta_k(0), U_{k,R})\right)^2.
$$

To finish the proof of Theorem~\ref{hyp-thm} we consider two cases.

\vskip 0,3cm
\noindent{\bf Case 1.} {\sl There exist $R > 0$ and $c > 0$ such that $\omega(0,\overline{U}_{k_p,R} \cap \partial \Delta_{k_p}(0),U_{k_p,R})\geq c$ for some sequence of positive integers $k_p$ with $k_1 < k_2 < k_3 < \cdots$.} 

\vskip 0,2cm
In this case we conclude from the previous inequality that for these numbers $k_p$ one has:

\begin{equation}\label{harm-eq}
\rho(0,\partial U_{k_p,R}) \geq \frac{\pi^2 k_p}{16}c^2=c^* k_p,
\end{equation}
where $c^*=\frac{\pi^2 c^2}{16}>0$.

Hence the ball $B^n_R(0)$ contains the set $f_{k_p}(\Delta_{c^*k_p})$ for arbitrarily large numbers $k_p$. This contradicts the Kobayashi hyperbolicity of $B^n_R(0)$.

\vskip 0,2cm
\noindent{\bf Case 2.} For each $R > 0$ one has $\lim_{k \rightarrow \infty}\omega(0,\overline{U}_{k,R} \cap \partial \Delta_k(0),U_{k,R}) = 0$.

\vskip 0,2cm
We first observe that for each $\varepsilon >0$ we have $\omega(0,\partial U_{k,R} \cap \Delta_k(0),U_{k,R}) \geq 1 - \varepsilon \,$ for every sufficiently large $k$. Since $f_k(\partial U_{k,R}  \cap \Delta_k(0)) \subset \partial B^n_R(0)$, we conclude that
$$
\sup_{f_k({\partial U_{k,R} \cap \Delta_k(0)})}\varphi \leq c_R.
$$ 
It follows also from the choice of $C$ that
$$ 
\sup_{f_k({{\bar U}_{k,R} \cap \partial \Delta_k(0)})}\varphi \leq C.
$$
Then the Mean Value Inequality (\ref{mv-eq})
implies the inequality
$$
\alpha \leq C \varepsilon + {c_R} (1 - \varepsilon).
$$
If we choose now $\varepsilon$ so small that $C \varepsilon < \frac{\alpha}{2}$ and then $R$ so large that ${c_R} (1 - \varepsilon) < \frac{\alpha}{2}$, then we get a contradiction. This concludes the proof of Theorem~\ref{hyp-thm}. \qed

\vskip 0,2cm
\noindent{\bf {Remark 1.}} If the domain $\Omega$ is, moreover, assumed to be strictly pseudoconvex, then we can also give a completely different proof of Theorem~\ref{hyp-thm} which uses recent nontrivial results on the structure of the core obtained in \cite{HST18}, \cite{PS19} and \cite{Sl19}.
Indeed, we first prove the following

\vspace{2mm}
\noindent
{\bf Claim.} The core $\mathfrak{c}(\Omega)$ is empty.

\vspace{2mm}
\noindent \textbf{Proof of the Claim.} The argument here is similar to the one used to prove that $\mathfrak{c}(\Omega_H) = \emptyset$ in the example considered in the introduction. 
Assume to get a contradiction that $\mathfrak{c}(\Omega) \neq \emptyset$ and let $E_{j_0}$ be one of the closed 1-pseudoconcave sets in the decomposition of $\mathfrak{c}(\Omega)$ granted by Theorem II in \cite{PS19}. Then the restriction of the antipeak function $\varphi$ to $E_{j_0}$ is constant. In view of strict pseudoconvexity of $\Omega$, one has that $\mathfrak{c}(\Omega) \cap \partial \Omega = \emptyset$. Then, since $E_{j_0}$ is 1-pseudoconcave in the sense of Rothstein, we conclude that the set $E_{j_0}$ has to be unbounded. It follows now from the requirement $\lim_{\norm{z} \rightarrow \infty} \varphi(z) = 0$ on the strong antipeak function $\varphi$ that $\varphi \equiv 0$ on $E_{j_0}$, which is impossible by the definition of the antipeak function, since $\varphi$ is positive on $\Omega$. This proves the Claim.  $\hfill \Box$

\vspace{1mm}
Now, using the argument of Lemma 1 in \cite{Sh19}, we get a bounded continuous strictly psh. function $\phi$ on $\Omega$. It follows then from Theorem 3 on p. 362 in \cite{Si81} that $\Omega$ is Kobayashi hyperbolic.

\vskip 0,1cm
We do not know if a similar argument can also be applied for general (not necessarily strictly pseudoconvex) unbounded domains in ${\mathbb C}^n$.

\vskip 0,3cm
\noindent{\bf Remark 2.} A weaker notion of an antipeak function was introduced and studied in \cite{G99}. That notion is not strong enough to guarantee the claim of Theorem~\ref{hyp-thm} as it can be seen from the following example:

\vskip 0,1cm
Let $\,\,\, \Omega:= \big\{(z,w) \in {\mathbb C}^2: \log\abs{w} + \big(\abs{z}^2 + \abs{w}^2\big) < 0 \big\} \subset {\mathbb C}^2$ and let $\varphi(z, w):= - \log\abs{z}$. It is easy to see that $\varphi$ is an antipeak function for $\Omega$ in the sense of \cite{G99}, but $\Omega$ is obviously not Kobayashi hyperbolic due to the fact that $\{w = 0\} \subset \Omega$.

\vskip 0,1cm
We do not know if for an unbounded domain $\Omega$ in ${\mathbb C}^n$ (which we can assume in addition to be pseudoconvex or, even, strictly pseudoconvex) the fact that $\mathfrak{c}(\Omega) = \emptyset$ implies that there is a strong antipeak function at infinity for $\Omega$.

\vspace{3mm}
Finally, we point out that in the paper \cite{NH08}, the authors considered (non necessarily continuous) bounded above psh. functions $\phi$ defined on some unbounded domains in $\mathbb C^n$ and having the property $\lim_{\norm{z} \rightarrow \infty}\phi(z) = -\infty$ with the aim to study a Dirichlet type problem on some family of unbounded domains.

\section{Construction of $\Psi$ and of a Wermer type set in $\Omega_{\Psi}$}\label{wermer-sect}

Let $\{a_n, n \in \mathbb N\}$ be the sequence of points with entire coordinates in $\mathbb C$ such that $a_1=0$ and, for every $n \in \mathbb N$, the set $ (\Zb + i\Zb) \cap \{ \zeta \in \mathbb C : -n \leq {\rm Re}(\zeta), {\rm Im}(\zeta) \leq n\}$ consists of the points $a_1, \dots, a_{(2n+1)^2}$. We may select the points to form a spiral turning anticlockwise, starting with $a_2:(1,0)$, $a_3:(1,1)$, ... (See Figure~\ref{Fig1}.)

 \begin{figure}[h!]
   %\documentclass{minimal}
%\usepackage{tikz}
%\usetikzlibrary{calc}

%\begin{document}

\begin{center}
\begin{tikzpicture}%[scale=\ech]
%\draw [domain=-1.8:1.8] plot (\x,{(\x)^2});
%\draw [dotted] [domain= -2:-1.8] plot (\x,{(\x)^2});
%\draw [dotted] [domain= 1.8:2] plot (\x,{(\x)^2});

\draw[->] [dashed] (0,-1.5) -- (0,1.5) ;
\draw[->] [dashed] (-1.5,0) -- (2.5,0) ;

\draw  (0,0) -- (1,0) ;

\draw  (1,0) -- (1,1) ;

\draw  (1,1) -- (0,1) ;

\draw  (0,1) -- (-1,1) ;

\draw  (-1,1) -- (-1,0) ;

\draw  (-1,0) -- (-1,-1) ;

\draw  (-1,-1) -- (0,-1) ;

\draw  (0,-1) -- (1,-1) ;

\draw  (1,-1) -- (2,-1) ;

\draw  (2,-1) -- (2,0) ;

\draw  (2,0) -- (2,1) ;

\draw (2,1) -- (2,2);

\draw (2,2) -- (1,2);

\draw (1,2) -- (0,2);

\draw[dotted] (0,2) -- (-0.8,2);

\draw (0.5,0) node[scale=0.65] {$>$};

\draw (1,0.5) node[scale=0.65] {$\wedge$};

\draw (0.5,1) node[scale=0.65] {$<$};

\draw (-0.5,1) node[scale=0.65] {$<$};

\draw (-1,0.5) node[scale=0.65] {$\vee$};

\draw (-1,-0.5) node[scale=0.65] {$\vee$};

\draw (-0.5,-1) node[scale=0.65] {$>$};

\draw (0.5,-1) node[scale=0.65] {$>$};

\draw (1.5,-1) node[scale=0.65] {$>$};

\draw (2,-0.5) node[scale=0.65] {$\wedge$};

\draw (2,0.5) node[scale=0.65] {$\wedge$};

\draw (2,1.5) node[scale=0.65] {$\wedge$};

\draw (1.5,2) node[scale=0.65] {$<$};

\draw (0.5,2) node[scale=0.65] {$<$};

%\draw (-0.5,2) node[scale=0.65] {$<$};

\draw (0,0) node[scale=0.65] {$\bullet$};
\draw (0.25,-0.2) node[scale=1] {$a_1$};

\draw (1,0) node[scale=0.65] {$\bullet$};
\draw (1.2,-0.2) node[scale=1] {$a_2$};

\draw (1,1) node[scale=0.65] {$\bullet$};
\draw (1.2,1.2) node[scale=1] {$a_3$};

\draw (0,1) node[scale=0.65] {$\bullet$};
\draw (0.25,1.2) node[scale=1] {$a_4$};

\draw (-1,1) node[scale=0.65] {$\bullet$};
\draw (-1.2,1.2) node[scale=1] {$a_5$};

\draw (-1,0) node[scale=0.65] {$\bullet$};
\draw (-1.2,-0.2) node[scale=1] {$a_6$};

\draw (-1,-1) node[scale=0.65] {$\bullet$};
\draw (-1.2,-1.2) node[scale=1] {$a_7$};

\draw (0,-1) node[scale=0.65] {$\bullet$};
\draw (0.25,-1.2) node[scale=1] {$a_8$};

\draw (1,-1) node[scale=0.65] {$\bullet$};
\draw (1.2,-1.2) node[scale=1] {$a_9$};

\draw (2,-1) node[scale=0.65] {$\bullet$};
\draw (2.3,-1.2) node[scale=1] {$a_{10}$};

\draw (2,0) node[scale=0.65] {$\bullet$};
\draw (2.35,-0.2) node[scale=1] {$a_{11}$};

\draw (2,1) node[scale=0.65] {$\bullet$};
\draw (2.35,1.2) node[scale=1] {$a_{12}$};

\draw (2,2) node[scale=0.65] {$\bullet$};
\draw (2.35,2.2) node[scale=1] {$a_{13}$};

\draw (1,2) node[scale=0.65] {$\bullet$};
\draw (1.35,2.2) node[scale=1] {$a_{14}$};

\draw (0,2) node[scale=0.65] {$\bullet$};
\draw (0.35,2.2) node[scale=1] {$a_{15}$};

%\draw (2,2) node[scale=0.65] {$\bullet$};

%\draw (1,2) node[scale=0.65] {$\bullet$};

%\draw (0,2) node[scale=0.65] {$\bullet$};

%\draw (-1,2) node[scale=0.65] {$\bullet$};

%\draw (-2,2) node[scale=0.65] {$\bullet$};

%\draw (-2,1) node[scale=0.65] {$\bullet$};

%\draw (-2,0) node[scale=0.65] {$\bullet$};

%\draw (-2,-1) node[scale=0.65] {$\bullet$};

%\draw (-2,-2) node[scale=0.65] {$\bullet$};

%\draw (-1,-2) node[scale=0.65] {$\bullet$};

%\draw (0,-2) node[scale=0.65] {$\bullet$};

%\draw (1,-2) node[scale=0.65] {$\bullet$};

%\draw (2,-2) node[scale=0.65] {$\bullet$};

%\draw (3,-2) node[scale=0.65] {$\bullet$};

%\draw (3,-1) node[scale=0.65] {$\bullet$};

%\draw (3,0) node[scale=0.65] {$\bullet$};

%\draw (3,1) node[scale=0.65] {$\bullet$};

%\draw (3,2) node[scale=0.65] {$\bullet$};

%\draw (0.3,1.55) node[scale=0.75] {$it$};

%\draw (-1.3,1.69) node[scale=0.65] {$\bullet$};
%\draw (1.3,1.69) node[scale=0.65] {$\bullet$};

%\draw[<->] (-1.25,1.68) -- (-0.05,1.98);

%\draw (-0.7, 2.25) node[scale=0.65] {$\delta_{\Omega_1,0}^-(t)$};

%\draw[<->] (1.25,1.68) -- (0.05,1.98);

%\draw (0.8, 2.25) node[scale=0.65] {$\delta_{\Omega_1,0}^+(t)$};

%\draw (-1.4,0.5) node{$\Omega_1$} ;

\end{tikzpicture}
\end{center}
  \caption{}\label{Fig1}
\end{figure}

\subsection{Construction and properties of a Wermer type set in $\Omega_{\Psi}$}\label{wermer-subsection} We consider the following Wermer type set, whose construction is similar to the one used in \cite{HST12}. Let $\{\varepsilon_n\}_{n \in \mathbb N}$ be a sequence of positive numbers, decreasing to zero. First conditions on the speed of convergence of $\{\varepsilon_n\}$ are provided by Lemma 2.2 in~\cite{HST12}. Then, for every $n \in \mathbb N$, let
$$
E_n:=\{(z,w) \in \mathbb C^2:\ w=\sum_{j=1}^n \varepsilon_j \sqrt{z-a_j}\}.
$$
Following~\cite{HST12}, we define the Wermer type set

$$
\mathcal E:= \cup_{R>0} \left(\lim_{n \rightarrow \infty}\left(E_n \cap \overline{B^2_R(0)}\right)\right) \subset \mathbb C^2,
$$
where the limit is understood with respect to the Hausdorff distance.

Moreover, the same argument as in  Lemma 5.1 of~\cite{HST12} shows that there exists a psh. function $\phi:\mathbb C^2 \rightarrow [-\infty,+\infty)$ such that $\mathcal E=\{\phi = -\infty\}$ and $\phi$ is pluriharmonic on $\mathbb C^2 \backslash \mathcal E$.

For $\rho:[0,+\infty) \rightarrow \mathbb R$, let $\Psi(\rho)$ be the function defined on $\mathbb C^2$ by
\begin{equation}\label{function-eq}
\Psi(\rho)(z,w): = e^{\phi(z,w) + \rho(|{\rm Re}(z)|) + \rho(|{\rm Im}(z)|)}.
\end{equation}

To prove Theorem~\ref{main-thm} we will construct a convex function $\rho:[0,+\infty) \rightarrow [0,+\infty)$, with $\lim_{x \rightarrow +\infty}\rho(x) = +\infty$, such that the function $\Psi(\rho)$ satisfies the conclusion of Theorem~\ref{main-thm}.

\vspace{2mm}
But first we prove the following property of the domain $\Omega_{\Psi(\rho)}$ which is claimed in Theorem~\ref{main-thm}.
\begin{lemma}\label{core-lem}
The core $\mathfrak c(\Omega_{\Psi(\rho)})$ is not empty. In particular, $\Omega_{\Psi(\rho)}$ is not biholomorphic to a bounded domain in $\mathbb C^3$.
\end{lemma}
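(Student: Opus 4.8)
The plan is to exhibit a concrete bounded continuous plurisubharmonic function on $\Omega_{\Psi(\rho)}$ that is constant on the whole Wermer type set $\mathcal E$ (lifted into $\Omega_{\Psi(\rho)}$), thereby forcing $\mathcal E$ — or at least a nonempty closed subset of it — into the core. First I would recall that $\mathcal E \subset \{\phi = -\infty\}$, so $\Psi(\rho) = e^{\phi + \rho(|\Real(z)|) + \rho(|\Imag(z)|)}$ vanishes identically on $\mathcal E$; hence the set $\widetilde{\mathcal E} := \{(z,w,\zeta) : (z,w) \in \mathcal E,\ \Real(\zeta) > 0\}$ (say, with a fixed slice such as $\zeta = 1$) is contained in $\Omega_{\Psi(\rho)}$. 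The key point is that $\phi$ is pluriharmonic on $\mathbb C^2 \setminus \mathcal E$ and $\mathcal E = \{\phi = -\infty\}$; this is exactly the structure exploited in \cite{HST12} to produce nonempty cores. I would then build a bounded continuous psh.\ function $u$ on $\Omega_{\Psi(\rho)}$ that is not strictly psh.\ at points of $\widetilde{\mathcal E}$: the natural candidate is something like $u(z,w,\zeta) = \chi(\Psi(\rho)(z,w) - \Real(\zeta)) \cdot g(z,w)$ or, more simply, a truncation of a psh.\ function that degenerates along $\mathcal E$, e.g.\ using the fact that $\log\Psi(\rho) = \phi + \rho(|\Real z|) + \rho(|\Imag z|)$ is psh.\ (as $\phi$ is psh.\ and $\rho$ convex composed with $|\cdot|$ is subharmonic, hence psh.\ in $z$ alone) and unbounded below precisely on $\mathcal E$.

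More precisely, I would argue as follows. By Theorem~II in \cite{PS19}, to show $\mathfrak c(\Omega_{\Psi(\rho)}) \neq \emptyset$ it is enough to produce a point $p \in \Omega_{\Psi(\rho)}$ at which \emph{every} bounded continuous psh.\ function fails to be smooth and strictly psh. I would take $p$ to lie on the lifted Wermer set $\widetilde{\mathcal E}$. Suppose $v$ were a bounded continuous psh.\ function on $\Omega_{\Psi(\rho)}$ that is smooth and strictly psh.\ near $p$. Restricting to a complex line, or rather to the one-dimensional analytic pieces making up $\mathcal E$ (the branches $w = \sum \varepsilon_j\sqrt{z-a_j}$, which over small discs avoiding the branch points are graphs of holomorphic functions), one gets a family of holomorphic discs in $\Omega_{\Psi(\rho)}$ passing through (a neighbourhood of) $p$. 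Using the maximum principle together with the fact that $\Psi(\rho) \to 0$ only on $\mathcal E$ while being strictly positive off $\mathcal E$, one shows these discs can be taken with boundaries staying in a compact part of $\Omega_{\Psi(\rho)}$; the subharmonicity of $v$ restricted to each disc then contradicts strict plurisubharmonicity of $v$ at $p$ in the limit, because the analytic structure of $\mathcal E$ accumulates too densely near $p$. The cleanest route is probably to invoke directly the machinery of \cite{HST12}: the same estimates on the speed of $\{\varepsilon_n\}$ (Lemma~2.2 and Lemma~5.1 there) that make $\phi$ a well-defined psh.\ function with $\{\phi = -\infty\} = \mathcal E$ also show that $\mathcal E$ is pluripolar and carries no bounded psh.\ function which is strictly psh.\ transversally to it — so every bounded continuous psh.\ function on any domain containing $\mathcal E$ in its interior is degenerate along $\mathcal E$.

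I expect the main obstacle to be the transition from ``$\mathcal E \subset \mathbb C^2$ has the Wermer-type degeneracy'' to ``$\widetilde{\mathcal E} \subset \Omega_{\Psi(\rho)} \subset \mathbb C^3$ lies in the core.'' The added $\zeta$-direction is harmless (one fixes a value of $\zeta$ with $\Real(\zeta)$ large, since $\Psi(\rho) = 0$ on $\mathcal E$ so the whole half-space $\Real(\zeta) > 0$ over $\mathcal E$ is in the domain), but one must check that boundedness of psh.\ functions on $\Omega_{\Psi(\rho)}$ — an unbounded domain — still propagates the degeneracy, i.e.\ that the relevant Wermer discs in the $\mathbb C^3$ picture have controlled boundary behaviour. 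The second delicate point is verifying that $\Psi(\rho)$, for the $\rho$ to be constructed, really is plurisubharmonic: $\phi$ is psh.\ and $(z,w) \mapsto \rho(|\Real z|) + \rho(|\Imag z|)$ is psh.\ in $(z,w)$ when $\rho$ is convex nondecreasing (so that $x \mapsto \rho(|x|)$ is convex, hence $\rho(|\Real z|)$ and $\rho(|\Imag z|)$ are each subharmonic in $z$, hence psh.), and the sum of psh.\ functions composed with $\exp$ (a convex increasing function) remains psh.; this is routine but should be stated carefully since the whole construction hinges on it. Finally, the conclusion that $\Omega_{\Psi(\rho)}$ is not biholomorphic to a bounded domain is immediate from the biholomorphic invariance of the core, since every bounded domain in $\mathbb C^3$ has empty core (as $\|z\|^2$ is smooth and strictly psh.).
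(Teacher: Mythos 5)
Your plan is correct and follows essentially the same route as the paper: the paper's proof simply observes that $\mathcal E \times \{1\} \subset \Omega_{\Psi(\rho)}$ (since $\Psi(\rho)$ vanishes on $\mathcal E = \{\phi = -\infty\}$) and invokes the argument of Theorem~2.2 in \cite{HST18} to get the Liouville property for $\mathcal E \times \{1\}$, hence its containment in the core, with the final claim following from biholomorphic invariance exactly as you say. The points you flag as potential obstacles (passing from $\mathbb C^2$ to the slice $\{\zeta = 1\}$ in $\mathbb C^3$, and plurisubharmonicity of $\Psi(\rho)$) are indeed handled just as routinely as you anticipate.
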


\begin{proof}
It follows from the same argument as in ~\cite[Theorem~2.2]{HST18} that the set $\mathcal E \times \{1\}$, which is contained in $\Omega_{\Psi(\rho)}$, satisfies the Liouville type property, meaning that every continuous psh. function defined in a neighbourhood of $\mathcal E \times \{1\}$ and bounded there from above is constant on $\mathcal E \times \{1\}$. Hence, $\mathcal E \times \{1\}$ is contained in $\mathfrak c(\Omega_{\Psi(\rho)})$.
\end{proof}

Let $\mathcal F_{conv}$ denote the set of convex functions $\rho:[0,+\infty) \rightarrow [0,+\infty)$, with $\lim_{x \rightarrow +\infty}\rho(x) = +\infty$. 
For $\rho \in \mathcal F_{conv}$, let $(z_0,w_0,\zeta_0) \in \Omega_{\Psi(\rho)}$ and let $U \ni (z_0,w_0,\zeta_0)$ be a neighbourhood of $(z_0,w_0,\zeta_0)$, relatively compact in $\Omega_{\Psi(\rho)}$. We first prove a localization lemma that will be used for a sequence of ``large'' holomorphic disks with center in $U$. The present form of Lemma~\ref{loc-lem} and its proof were suggested by the referee. Let $\pi_{z,w}: \mathbb C^3_{z,w,\zeta} \rightarrow \mathbb C^2_{z,w}$ denote the canonical projection. 

\begin{lemma}\label{loc-lem}
Let $r > 0$ and let $f:\Delta_r(0) \rightarrow \Omega_{\Psi(\rho)}$ be holomorphic. Then
$$
\pi_{z,w}\left(f\left(\Delta_{r/3}(0)\right)\right) \subset \left\{(z,w) \in \mathbb C^2:\ \Psi(\rho)(z,w) < 2{\rm Re}(\zeta(0))\right\}.
$$
\end{lemma}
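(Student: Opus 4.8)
The plan is to reduce the whole statement to the classical Harnack inequality applied to the harmonic function $\operatorname{Re}(\zeta)$ along the disk.

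First I would write $f = (z(\cdot), w(\cdot), \zeta(\cdot))$ for the three components of $f$ and set $u := \operatorname{Re}(\zeta(\cdot))$. Since $f\bigl(\Delta_r(0)\bigr) \subset \Omega_{\Psi(\rho)}$, the defining inequality of the rigid domain gives $u(\lambda) = \operatorname{Re}(\zeta(\lambda)) > \Psi(\rho)\bigl(z(\lambda),w(\lambda)\bigr)$ for every $\lambda \in \Delta_r(0)$. Because $\Psi(\rho) = e^{\phi + \rho(|\operatorname{Re}(z)|) + \rho(|\operatorname{Im}(z)|)} \geq 0$ by \eqref{function-eq}, this shows that $u$ is a \emph{strictly positive} harmonic function on $\Delta_r(0)$.

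Next I would invoke Harnack's inequality for the positive harmonic function $u$ on the disk $\Delta_r(0)$: for $|\lambda| \le r/3$ one has $u(\lambda) \le \frac{r+|\lambda|}{r-|\lambda|}\,u(0) \le \frac{r + r/3}{r - r/3}\,u(0) = 2\,u(0) = 2\operatorname{Re}(\zeta(0))$. Combining this with the previous paragraph, for every $\lambda \in \Delta_{r/3}(0)$ we get $\Psi(\rho)\bigl(z(\lambda),w(\lambda)\bigr) < u(\lambda) \le 2\operatorname{Re}(\zeta(0))$, i.e. the point $\pi_{z,w}\bigl(f(\lambda)\bigr) = \bigl(z(\lambda),w(\lambda)\bigr)$ lies in $\{(z,w)\in\mathbb C^2 : \Psi(\rho)(z,w) < 2\operatorname{Re}(\zeta(0))\}$. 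This is precisely the assertion of the lemma.

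I do not expect a genuine obstacle here: the argument is essentially a one-line application of Harnack. The only two points requiring a small amount of care are, first, verifying that $u = \operatorname{Re}(\zeta)$ is strictly positive so that Harnack applies at all — which is exactly where the nonnegativity of $\Psi(\rho)$ is used — and second, the bookkeeping that the Harnack ratio on $\Delta_{r/3}(0)$ equals $\frac{r+r/3}{r-r/3} = 2$, which is what pins down the factor $2$ and the shrinking radius $r/3$ in the statement.
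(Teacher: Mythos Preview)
Your proof is correct and is essentially identical to the paper's own argument: both apply Harnack's inequality to the positive harmonic function $\lambda\mapsto\operatorname{Re}(\zeta(\lambda))$ on $\Delta_r(0)$ to get $\operatorname{Re}(\zeta(\lambda))<2\operatorname{Re}(\zeta(0))$ on $\Delta_{r/3}(0)$, and then combine this with the defining inequality of $\Omega_{\Psi(\rho)}$. Your write-up is slightly more explicit about why $\operatorname{Re}(\zeta)$ is positive and about the Harnack constant, but the approach is the same.
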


\begin{proof}
In view of the harmonicity and positivity of the function $\lambda \mapsto {\rm Re}(\zeta(\lambda))$ on $\Delta_r(0)$ and Harnack's inequality (see Theorem 1.4.1 in~\cite{AG01}), we have: $2{\rm Re}(\zeta(0)) > {\rm Re}(\zeta(\lambda))$ on $\Delta_{r/3}(0)$. Hence, by the definition of $\Omega_{\Psi(\rho)}$, we have on $\Delta_{r/3}(0)$:
$$
\Psi(\rho)(z(\lambda),w(\lambda)) < 2{\rm Re}(\zeta(0)).
$$
\end{proof}

\subsection{Construction of the convex function $\rho$} We now construct the function $\rho$ that will enter the definition of the function $\Psi$ in Theorem~\ref{main-thm}.

\begin{lemma}\label{conv-lem}
Let $\{c(n),\ n \geq 0\}$ be an arbitrary increasing sequence of positive numbers. Then there is a strictly convex function $\rho$ of class $\mathcal C^{\infty}$ on $[0,+\infty)$ such that $\rho'(0) = 0$ and for every $n \geq 0$ and every $t \in (n,n+1]$, one has
$$
\rho(t) > c(n).
$$
\end{lemma}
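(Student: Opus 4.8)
The plan is to reduce the statement to a cleaner claim and then verify that claim by an explicit soft construction.

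\emph{Reduction.} It suffices to produce a $C^\infty$ strictly convex function $\rho$ on $[0,+\infty)$ with $\rho'(0)=0$ and $\rho(n)\ge c(n)$ for every integer $n\ge 0$. Indeed, strict convexity means $\rho'$ is strictly increasing, so $\rho'(t)>\rho'(0)=0$ for $t>0$; hence $\rho$ is strictly increasing on $[0,+\infty)$, and therefore for every $n\ge 0$ and every $t\in(n,n+1]$ we get $\rho(t)>\rho(n)\ge c(n)$, which is exactly what is required. (The only point of care here is the endpoint $t=0$: reconciling $\rho'(0)=0$ with strict convexity is what will force the building block below to vanish at the origin.)

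\emph{Construction and verification.} I would set
\[
\rho(t):=\big(c(2)+1\big)+\int_0^t g(s)\,ds,\qquad g(s):=\int_0^s h(u)\,du,
\]
where $h:[0,+\infty)\to(0,+\infty)$ is any $C^\infty$ function with $h(u)\ge c(\lfloor u\rfloor+2)$ for all $u\ge0$; such a smooth positive majorant of the locally bounded step function $u\mapsto c(\lfloor u\rfloor+2)$ plainly exists (this routine smoothing is the only ``technical'' step, and it is entirely standard). Then $\rho\in C^\infty$; $g(0)=0$, so $\rho'(0)=g(0)=0$; and $\rho'=g$ has $g'=h>0$, hence $\rho'$ is strictly increasing and $\rho$ is strictly convex. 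For the size condition, $\rho(0)=c(2)+1\ge c(0)$ and $\rho(1)=c(2)+1+\int_0^1 g\ge c(2)+1\ge c(1)$ (using that $\{c(n)\}$ is increasing and $g\ge 0$), while for $n\ge 2$, since $g$ is nondecreasing and $g(n-1)=\int_0^{n-1}h\ge\int_{n-2}^{n-1}h\ge c((n-2)+2)=c(n)$,
\[
\rho(n)\ \ge\ \int_0^n g(s)\,ds\ \ge\ \int_{n-1}^n g(s)\,ds\ \ge\ g(n-1)\ \ge\ c(n).
\]
By the Reduction this proves the lemma.

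\emph{Main difficulty.} There is essentially no hard point; the lemma is a purely qualitative construction. The only mild care is, as noted, at $t=0$ (which dictates choosing the building block $g$ with $g(0)=0$, achieved by writing it as an integral of a positive function), and in the elementary fact that a locally bounded step function admits a smooth strictly increasing majorant. An equally valid alternative would be to mollify a piecewise-linear convex majorant of the points $(n,c(n))$ and add $\varepsilon t^2$ to force strict convexity; I prefer the integral formula above because it keeps all verifications to one line.
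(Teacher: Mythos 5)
Your proof is correct, and it takes a genuinely different (and arguably cleaner) route than the paper. The paper builds the function ``from the values'': it inductively constructs a convex, piecewise-affine interpolant $\rho_1$ of the data $c(n)$ (on each $[n,n+1]$ either continuing the previous affine piece or ramping up to reach $c(n+1)$), then regularizes by convolving the even extension $\rho_1(|t|)$ with a symmetric mollifier and adds $t^2$ to force strict convexity; the even reflection plus the symmetric mollifier is what produces $\rho'(0)=0$. You instead build the function ``from the second derivative'': you prescribe $\rho''=h$ as a smooth positive majorant of a shifted step function of the $c(n)$ and integrate twice from $0$. This makes strict convexity and $\rho'(0)=0$ automatic (no need for the $+t^2$ term or the even reflection), and your reduction to checking $\rho(n)\ge c(n)$ at integers via monotonicity is a nice simplification of the pointwise estimate. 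Both arguments ultimately rely on the same routine smoothing fact, just applied to different objects (you smooth a monotone step function; the paper mollifies a piecewise-affine convex function), and your parenthetical alternative --- mollifying a piecewise-linear convex majorant and adding $\varepsilon t^2$ --- is essentially the paper's proof. The only price of your construction is that $\rho$ overshoots the data $c(n)$ by a wide margin, but nothing in the lemma or its later use requires a tight majorant, so this is harmless.
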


\begin{proof}
We first construct inductively an auxiliary convex function $\rho_1$ such that for every $n \geq 0$,\,\, $\rho_1$ is affine on the segment $[n,n+1]$. For $n=0$ we set ${\rho_1}\bigr|_{[0,1]} = c(1)$. Let $n \in \mathbb N$ and assume that $\rho_1$ is already constructed on $[0,n]$. In particular, there exist $a_n,b_n > 0$ such that for every $t \in (n-1,n]$
$$
\rho_1(t) = a_n t + b_n.
$$
\begin{itemize}
\item If $a_n (n+1) + b_n \geq c(n+1)$, we set $\rho_1(t) = a_n t + b_n$ for every $t \in (n,n+1]$,
\item If $a_n (n+1) + b_n < c(n+1)$, we set $\rho_1(t) = (a_n n + b_n)(1-(t-n)) + c(n+1) (t-n)$ for every $t \in (n,n+1]$.
\end{itemize}
This defines the function $\rho_1$ on $[0,+\infty)$ by induction.
Let now $\chi: \mathbb R \rightarrow \mathbb R$ be a nonnegative ${\mathcal C^{\infty}}-$smooth function with support contained in $[-1/4,1/4]$ and satisfying $\int_{\mathbb R} \chi = 1$. Then the restriction to $[0,+\infty)$ of the function $\rho$ defined on $\mathbb R$ by $\rho(t):=\tilde{\rho}_1 \ast \chi(t) + t^2$, where $\tilde{\rho}_1(t) = \rho_1(|t|)$ for every $t \in \mathbb R$, will satisfy all the conditions of Lemma~\ref{conv-lem}.
\end{proof}

For every $n \in \mathbb N$, let
\begin{multline}\label{sn-eq}
S_n:=\left\{z \in \mathbb C:\ -\left(n+\frac{3}{4}\right) \leq {\rm Re}(z), {\rm Im}(z) \leq n + \frac{3}{4}\right\} \backslash \\
\backslash \left\{z \in \mathbb C:\ -\left(n+\frac{1}{4}\right) < {\rm Re}(z), {\rm Im}(z) < n + \frac{1}{4}\right\}
\end{multline}
and
\begin{multline}\label{tn-eq}
T_n:=\left\{z \in \mathbb C:\ {\rm Re}(z)= \pm \left(n+\frac{1}{2}\right),\ |{\rm Im}(z)| \leq n + \frac{1}{2}\right\} \cup \\
\cup \left\{z \in \mathbb C:\ |{\rm Re}(z)| \leq n+\frac{1}{2},\ {\rm Im}(z) = \pm \left(n + \frac{1}{2}\right)\right\}.
\end{multline}

(See Figure~\ref{Fig2}.)

 \begin{figure}[h!]
   %\documentclass{minimal}
%\usepackage{tikz}
%\usetikzlibrary{calc}

%\begin{document}

\begin{center}
\begin{tikzpicture}[scale=1.5]
%\draw [domain=-1.8:1.8] plot (\x,{(\x)^2});
%\draw [dotted] [domain= -2:-1.8] plot (\x,{(\x)^2});
%\draw [dotted] [domain= 1.8:2] plot (\x,{(\x)^2});

%\draw (-1.3,-1.3) -- (1.3,-1.3);
%\draw (1.3,-1.3) -- (1.3,1.3);
%\draw (1.3,1.3) -- (-1.3,1.3);
%\draw (-1.3,1.3) -- (-1.3,-1.3);

\draw[color=black,fill=black!10] (-1.3,-1.3) -- (1.3,-1.3) -- (1.3,1.3) -- (-1.3,1.3) -- cycle;

\draw[color=black,fill=white] (-0.7,-0.7) -- (0.7,-0.7) -- (0.7,0.7) -- (-0.7,0.7) -- cycle;

\draw[ultra thick][dashed] (-1,-1) -- (1,-1) -- (1,1) -- (-1,1) -- cycle;

\draw[->] [dashed] (0,-1.7) -- (0,1.7) ;
\draw[->] [dashed] (-1.7,0) -- (1.7,0) ;

\draw[->] (-2.5,0) -- (-1.01,-0.7);
\draw (-2.7,0) node[scale=1] {$T_n$};

\draw[->] (2.5,0) -- (1.05,1.15);
\draw (2.7,0) node[scale=1] {$S_n$};

%\draw (0.7,0) node[scale=0.65] {$\bullet$};
%\draw (0.5,-0.2) node[scale=0.65] {$(n-1/4,0)$};

%\draw (1.3,0) node[scale=0.65] {$\bullet$};
%\draw (1.2,-0.2) node[scale=0.75] {$a_2$};

%\draw (0,0.7) node[scale=0.65] {$\bullet$};
%\draw (1.2,-0.2) node[scale=0.75] {$a_2$};

%\draw (0,1.3) node[scale=0.65] {$\bullet$};
%\draw (1.2,-0.2) node[scale=0.75] {$a_2$};

%\draw (-0.7,0) node[scale=0.65] {$\bullet$};
%\draw (1.2,-0.2) node[scale=0.75] {$a_2$};

%\draw (-1.3,0) node[scale=0.65] {$\bullet$};
%\draw (1.2,-0.2) node[scale=0.75] {$a_2$};

%\draw (0,-0.7) node[scale=0.65] {$\bullet$};
%\draw (1.2,-0.2) node[scale=0.75] {$a_2$};

%\draw (0,-1.3) node[scale=0.65] {$\bullet$};
%\draw (1.2,-0.2) node[scale=0.75] {$a_2$};

\end{tikzpicture}
\end{center}
  \caption{}\label{Fig2}
\end{figure}

Since, for every $n \in \mathbb N$, the set $S_n$ does not contain any point with entire coordinates, then for every $n,m \in \mathbb N$ and for every $p \in S_n$, the restriction to $\Delta_{1/4}(p)$ of the defined above set $E_m$, denoted ${E_m \vert}_{\Delta_{1/4}(p)}$, is a union of holomorphic graphs of the form $\{w=f(z)\}$. More precisely, for every $p \in S_n$, there are holomorphic functions $f_1,\dots,f_{2^m}$, defined on $\Delta_{1/4}(p)$, such that ${E_m \vert}_{\Delta_{1/4}(p)}=\cup_{1 \leq j \leq 2^m}\{w=f_j(z)\}$. Since $S_n$ is compact in $\mathbb C$, it follows from the Montel Theorem that for every $p \in S_n$, ${\mathcal E {\vert}}_{\Delta_{1/4}(p)}=\cup_{\lambda \in \mathcal A}\{w=f_{\lambda}(z)\}$, where $\mathcal A$ denotes a Cantor set parametrising the branches of $\mathcal E$ over $\Delta_{1/4}(p)$. Indeed, since the sequence $\{\varepsilon_m\}_{m \in \mathbb N}$ converges to 0 sufficiently fast, then for every sufficiently large $m$, the set $E_{m+1}$ consists of $2^{m+1}$ disks that are sufficiently small perturbations of the $2^m$ disks consituting $E_m$. Passing to the limit when $m$ goes to infinity, we obtain a structure of a Cantor set on the vertical fibers.

Moreover, for every $\lambda \in \mathcal A$, $f_{\lambda}$ is holomorphic on $\Delta_{1/4}(p)$ and, hence, in view of the compactness of both the set $S_n$ and the family $\{w=f_{\lambda}(z)\}_{\lambda \in \mathcal A}$ with respect to the parameter $\lambda$, one can define

\begin{equation}\label{sup-eq}
\alpha(n):= \sup \{ |f'_{\lambda}(z)| : \lambda \in \mathcal A, z \in \Delta_{1/8}(p), p \in S_n \} < +\infty.
\end{equation}

\begin{lemma}\label{horiz-lem}
We can choose the sequence $\{\varepsilon_n\}_{n \in \mathbb N}$ decreasing and converging to zero so fast that
$$
\lim_{n \rightarrow \infty}\alpha(n) = 0.
$$
\end{lemma}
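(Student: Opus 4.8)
The plan is to control the derivatives $f'_\lambda(z)$ of the branches of $\mathcal E$ over disks $\Delta_{1/8}(p)$, $p \in S_n$, by tracking how the branch functions depend on the tail of the sequence $\{\varepsilon_m\}$. Recall that each branch of $E_m$ over such a disk is a partial sum $g_m(z) = \sum_{j=1}^m \varepsilon_j \sigma_j(z)\sqrt{z-a_j}$, where $\sigma_j(z) \in \{+1,-1\}$ is a fixed choice of branch of the square root (well-defined and holomorphic on $\Delta_{1/4}(p)$ because $S_n$ avoids the lattice $\Zb+i\Zb$ where the $a_j$ live, so $z - a_j$ never vanishes there), and the branches of $\mathcal E$ are the uniform limits $f_\lambda = \lim_{m\to\infty} g_m$. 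First I would split the derivative at a point $z \in \Delta_{1/8}(p)$ with $p \in S_n$ into a ``near'' part coming from lattice points $a_j$ close to $S_n$ and a ``far'' part coming from the remaining $a_j$. Since $p \in S_n$, every lattice point $a_j$ lies at Euclidean distance at least $1/4$ from $p$, hence at distance at least $1/8$ from $z \in \Delta_{1/8}(p)$; so $|\tfrac{d}{dz}\sqrt{z-a_j}| = \tfrac{1}{2}|z-a_j|^{-1/2} \le \tfrac{1}{2}(1/8)^{-1/2}$ is bounded by an absolute constant for all $j$. Therefore $|f'_\lambda(z)| \le C_0 \sum_{j=1}^\infty \varepsilon_j$, and more precisely the contribution of the indices $j$ with $a_j$ at distance $\ge n$ from the origin (equivalently $j \gtrsim n^2$) is bounded by $C_0 \sum_{j \ge c n^2} \varepsilon_j$.

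The key quantitative point is that for $p \in S_n$, the lattice points $a_j$ which are ``close'' to $\Delta_{1/8}(p)$ — say within distance $2$ — are themselves far from the origin: they satisfy $\|a_j\| \ge n - 2$, so their indices $j$ are at least of order $(n-2)^2$. For these nearby points one only has the bound $|\tfrac{d}{dz}\sqrt{z-a_j}| \le \tfrac{1}{2}(1/8)^{-1/2}$, a constant, but there are only boundedly many of them (at most some absolute constant, uniformly in $n$ and $p$, since they are distinct lattice points in a disk of radius $2$). For the lattice points at distance between $2$ and, say, $R$ from $z$, the derivative bound improves to $\tfrac12 R^{-1/2}$; and crucially, all lattice points within distance $R$ of $\Delta_{1/8}(p)$ still have $\|a_j\| \ge n - R$, hence index $j \ge c(n-R)^2$. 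Putting these pieces together, for $p \in S_n$ and $z \in \Delta_{1/8}(p)$,
$$
|f'_\lambda(z)| \;\le\; C_1 \sum_{j \ge c(n-2)^2} \varepsilon_j,
$$
for some absolute constants $C_1, c > 0$, where I absorb the boundedly-many close points and all far points into a single tail sum (using $\varepsilon_j$ decreasing). Hence $\alpha(n) \le C_1 \sum_{j \ge c(n-2)^2}\varepsilon_j$.

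With this estimate in hand the lemma is immediate: it suffices to choose the decreasing sequence $\{\varepsilon_n\}$ so fast that $\sum_{j=1}^\infty \varepsilon_j < \infty$ and, in addition, its tails go to zero — for instance $\varepsilon_n = 2^{-n}$, which already forces $\sum_{j \ge c(n-2)^2}\varepsilon_j \to 0$ as $n \to \infty$, giving $\lim_{n\to\infty}\alpha(n) = 0$. One must also double-check that this choice is compatible with the earlier constraints on the decay of $\{\varepsilon_n\}$ (namely the ``first conditions'' from Lemma 2.2 of \cite{HST12} ensuring $\mathcal E$ is a genuine Wermer-type set with the Cantor fiber structure, and the ``sufficiently fast'' convergence used above to get the Cantor structure over $\Delta_{1/4}(p)$); since those are all of the form ``$\varepsilon_{n+1}$ small relative to finitely many quantities depending on $\varepsilon_1,\dots,\varepsilon_n$'', one can satisfy all requirements simultaneously by an inductive choice, shrinking each $\varepsilon_{n+1}$ further if needed.

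The main obstacle I anticipate is not the summation estimate itself but making the geometric bookkeeping uniform in $p \in S_n$: one needs that the square-root branches $\sigma_j(z)\sqrt{z-a_j}$ are genuinely single-valued and holomorphic on each disk $\Delta_{1/4}(p)$ (this uses that $S_n$ is disjoint from $\Zb+i\Zb$ and that $\tfrac14$ is less than the lattice spacing), that the number of lattice points within a fixed distance of $\Delta_{1/8}(p)$ is bounded independently of $n$ and $p$, and — most importantly — that every lattice point within a bounded distance of $S_n$ has norm at least comparable to $n$, so that its index in the spiral enumeration is at least of order $n^2$. This last point is where the specific geometry of $S_n$ (an annular frame at ``radius $\approx n$'') interacts with the enumeration of the $a_j$: the spiral fills the square $[-k,k]^2$ with the first $(2k+1)^2$ points, so $\|a_j\| \le k \iff j \le (2k+1)^2$, i.e. $j \ge c\|a_j\|^2$, which is exactly what is needed. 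Once these uniformities are nailed down the proof is just the tail estimate above.
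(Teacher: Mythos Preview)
Your overall strategy—split the derivative $\sum_j \varepsilon_j/(2|z-a_j|^{1/2})$ into near and far parts and control each separately—is the same as the paper's, but the displayed bound
\[
|f'_\lambda(z)| \;\le\; C_1 \sum_{j \ge c(n-2)^2} \varepsilon_j
\]
is false, and the phrase ``absorb \dots all far points into a single tail sum (using $\varepsilon_j$ decreasing)'' is exactly where the argument breaks. The lattice points far from $S_n$ include those near the origin, e.g.\ $a_1=0$, which has \emph{small} index; its contribution to the derivative at $z\in\Delta_{1/8}(p)$, $p\in S_n$, is $\varepsilon_1/(2|z|^{1/2})\asymp n^{-1/2}$. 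With your suggested $\varepsilon_j=2^{-j}$ the right-hand side is of order $2^{-c(n-2)^2}$, so the inequality fails for all large $n$. Monotonicity of $\varepsilon_j$ cannot turn a small-index term into a tail term. (Relatedly, your remark that points at distance ``between $2$ and $R$'' enjoy the bound $\tfrac12 R^{-1/2}$ is inverted: distance in $[2,R]$ gives only $\tfrac{1}{2\sqrt2}$; it is distance $\ge R$ that gives $\tfrac{1}{2\sqrt R}$.)

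The fix—which is precisely the paper's argument—is to keep the two pieces separate rather than merging them into one tail. Given $\varepsilon>0$, first choose $k_0$ with $\sum_{j>(2k_0+1)^2}\varepsilon_j<\varepsilon/4$; since every lattice point lies at distance $\ge 1/8$ from $\Delta_{1/8}(p)$, this tail contributes at most $\sqrt2\cdot(\varepsilon/4)<\varepsilon/2$ to the derivative, uniformly in $n$ and $p\in S_n$. The remaining \emph{finitely many} indices $j\le(2k_0+1)^2$ have $a_j\in[-k_0,k_0]^2$, hence $|z-a_j|\gtrsim n-k_0\to\infty$, so their total contribution is $<\varepsilon/2$ once $n$ is large. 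In your framework the honest estimate is
\[
|f'_\lambda(z)| \;\le\; C_0 \sum_{j \ge c(n-R)^2}\varepsilon_j \;+\; \frac{1}{2\sqrt R}\sum_{j\ge1}\varepsilon_j,
\]
and one lets $R\to\infty$ with $n$ (equivalently: choose $R$ large first, then $n$ large), rather than fixing $R=2$.
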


\begin{proof}
We first point out that, since every map $f_{\lambda}$ is the uniform limit on $\Delta_{1/8}(p)$ of functions whose graph over $\Delta_{1/8}(p)$ is a branch of the multivalued holomorphic function $\sum_{k=1}^m \varepsilon_k \sqrt{z-a_k}$, it is sufficient to prove Lemma~\ref{horiz-lem} for these functions on $\Delta_{1/8}(p)$ uniformly with respect to $m \in \mathbb N$.

Let $\varepsilon > 0$. Since $\varepsilon_k$ decreases sufficiently fast to zero according to~\cite{HST12}, there exists $k_0 \geq 1$ such that $\sum_{k \geq {(2k_0+1)^2}+1} \varepsilon_k < \varepsilon/4$. Moreover, for every $n \in \mathbb N$ and every $p \in S_n$ we have
$$
\inf_{z \in \Delta_{1/8}(p)}d\left(z,\mathbb Z + i \mathbb Z\right) \geq \frac{1}{8}.
$$
Hence, for every $m > (2k_0+1)^2$, for every $n \in \mathbb N$ and every $p \in S_n$, each branch of the multivalued holomorphic function $\sum_{k=(2k_0+1)^2+1}^m \varepsilon_k \sqrt{z-a_k}$ is given by the graph of a holomorphic function such that the modulus of its derivative is bounded from above by $\varepsilon/2$ on $\Delta_{1/8}(p)$.

Now, there exists $n_0 > k_0$ such that for every $n \geq n_0$,
$$
\frac{1}{\inf \{ \sqrt{d(a_j,S_n)} : a_j \in \{\lambda \in \mathbb C:\ -k_0 \leq {\rm Re}(\lambda), {\rm Im}(\lambda) \leq k_0\}} < \frac{\varepsilon}{\sum_{j \geq 1}\varepsilon_j}.
$$
This implies that for every $n > n_0$ and for every $p \in S_n$, each branch of the multivalued holomorphic function $\sum_{k=1}^{(2k_0+1)^2} \varepsilon_k \sqrt{z-a_k}$ is the graph of a holomorphic function such that the modulus of its derivative is bounded from above by $\varepsilon/2$ on $\Delta_{1/8}(p)$.

We finally obtain that for every $\varepsilon > 0$, for every $n \geq n_0$, for every $p \in S_n$ and for every $m > (2k_0+1)^2$, every branch of the multivalued holomorphic function $\sum_{k=1}^m \varepsilon_k \sqrt{z-a_k}$ is the graph of a holomorphic function whose derivative is bounded from above by $\varepsilon$ on $\Delta_{1/8}(p)$. This completes the proof of Lemma~\ref{horiz-lem}.
\end{proof}

For every $w \in \mathbb C,\ \delta > 0$, we consider the set
$$
\mathcal E^{\delta}:=\cup_{(z,w) \in \mathcal E}\left(\{z\} \times \Delta_{\delta}(w)\right).
$$

In view of Lemma~\ref{horiz-lem}, we can define $q_0:=\inf\{k \in \mathbb N:\ \alpha(j) < \frac{1}{2}\ {\rm for} \ {\rm all}\ j \geq k\}$. Now, for every $n \geq q_0$, let $\mathcal H_{n}$ denote the set of holomorphic disks $D=\left\{(f(w),w),\ w \in \Delta_{1}(w_0)\}\right\}$ such that

\vskip 0,2cm
\begin{enumerate}
\item $(f(w_0),w_0) \in \mathcal E \cap \left(T_{n} \times \mathbb C\right)$,\\
\item for every $w \in \Delta_{1}(w_0)$, $|f'(w)| < 1$.
\end{enumerate}

\vskip 0,2cm
Let $\pi_w: \mathbb C^2_{z,w} \rightarrow \mathbb C_w$ denote the canonical projection and, for every $D \in \mathcal H_n$, let
$$
\beta^{\delta}(D):=\sup\left\{{\rm diam}(c):\ c\ {\rm connected}\ {\rm component}\ {\rm of}\ {\rm the} \ {\rm closure} \ {\rm of} \ \pi_w\left(\frac{1}{4} D \cap \mathcal E^{\delta}\right)\right\},
$$
where for a holomorphic disk $D=\left\{(f(w),w),\ w \in \Delta_{1}(w_0)\right\} \subset \mathcal H_{n}$ we denote by $\frac{1}{4} D$ the set $\frac{1}{4} D \coloneqq \left\{(f(w),w),\ w \in \Delta_{\frac{1}{4}}(w_0)\right\}.$ Here, in view of Conditions (1) and (2) above, we choose the radius equal to $1/4$ to insure that the disc $\frac{1}{4}D$ is contained in the set $S_n \times \mathbb C$. Moreover, it will also insure that the family of disks $\frac{1}{4}D_k$ considered in the proof of Lemma~\ref{vert-lem} will converge smoothly up to the boundary to $\frac{1}{4}D_{\infty}$.

\vskip 0,2cm
Finally, define
$$
\beta_{n}^{\delta}:=\sup_{D \in \mathcal H_n}\beta^{\delta}(D).
$$

\begin{lemma}\label{vert-lem}
For every $n \geq q_0$ we have
$$
\lim_{\delta \rightarrow 0}\beta_{n}^{\delta} = 0.
$$
\end{lemma}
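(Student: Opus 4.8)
The plan is to argue by contradiction with a normal families argument, using the transversality between the ``vertical'' disks of $\mathcal H_n$ and the ``horizontal'' sheets of $\mathcal E$, and to reduce the statement to the fact that such a disk meets $\mathcal E$ in a totally disconnected set. First I record the transversality. Fix $n \geq q_0$ and $D = \{(f(w),w):w \in \Delta_1(w_0)\} \in \mathcal H_n$. Since $\frac{1}{4}D \subset S_n \times \mathbb C$ and, over a neighbourhood of $S_n$, the set $\mathcal E$ has no branch point and splits into disjoint holomorphic graphs $\{w = f_\lambda(z)\}_{\lambda \in \mathcal A}$ with $|f_\lambda'| \leq \alpha(n) < \tfrac{1}{2}$, each $g_\lambda(w) := w - f_\lambda(f(w))$ is holomorphic on $\Delta_{1/4}(w_0)$ with $g_\lambda'(w) = 1 - f_\lambda'(f(w))f'(w)$, so $|g_\lambda'(w)-1| = |f_\lambda'(f(w))f'(w)| < \tfrac{1}{2}$ there by Condition (2), hence ${\rm Re}\,g_\lambda' > \tfrac{1}{2}$. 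Integrating $g_\lambda'$ along segments in the convex disk $\Delta_{1/4}(w_0)$ gives $|g_\lambda(w)-g_\lambda(w')| \geq \tfrac{1}{2}|w-w'|$; thus $g_\lambda$ is injective, $D$ meets each sheet of $\mathcal E$ in at most one point, and $\{w \in \Delta_{1/4}(w_0):|g_\lambda(w)|<\delta\}$ has diameter at most $4\delta$. This last bound does not yet control $\beta^\delta(D)$, since $\pi_w(\frac{1}{4}D \cap \mathcal E^\delta) = \bigcup_{\lambda \in \mathcal A}\{|g_\lambda|<\delta\}$ is a union indexed by a Cantor set whose ``gap scale'' depends on $D$; it is the uniformity over $\mathcal H_n$ that forces the contradiction argument.

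Suppose $\beta_n^\delta \not\to 0$. Then there are $\epsilon_0>0$, $\delta_k \downarrow 0$ and $D_k = \{(f_k(w),w):w\in\Delta_1(w_0^k)\} \in \mathcal H_n$ together with a connected component $c_k$ of $\overline{\pi_w(\frac{1}{4}D_k \cap \mathcal E^{\delta_k})}$ with ${\rm diam}\,c_k \geq \epsilon_0$. Since $(f_k(w_0^k),w_0^k) \in \mathcal E \cap (T_n\times\mathbb C)$, compactness of $T_n$ and boundedness of the $w$-fibres of $\mathcal E$ over $T_n$ keep the centres $w_0^k$ and the values $f_k(w_0^k)$ in a bounded set; as $|f_k'|<1$, the family $\{f_k\}$ is normal, so after passing to a subsequence $w_0^k \to w_0^\infty$ and $f_k \to f_\infty$ locally uniformly with $|f_\infty'| \leq 1$ and $(f_\infty(w_0^\infty),w_0^\infty) \in \mathcal E \cap (T_n\times\mathbb C)$. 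By the Blaschke selection theorem the $c_k$ subconverge in Hausdorff distance to a compact connected set $c_\infty$ with ${\rm diam}\,c_\infty \geq \epsilon_0$. Since $\mathcal E$ is closed and $\delta_k \to 0$, every Hausdorff limit point of $\{c_k\}$ lies in $Z := \pi_w(D_\infty \cap \mathcal E)$, where $D_\infty := \{(f_\infty(w),w):w\in\overline{\Delta_{1/4}(w_0^\infty)}\}$; thus $c_\infty \subseteq Z$.

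It remains to show that $Z$ is totally disconnected, which contradicts $c_\infty$ being a nondegenerate continuum contained in it. The transversality estimate applies to $D_\infty$ as well, because $|f_\lambda'| \leq \alpha(n) < \tfrac{1}{2}$ and $|f_\infty'| \leq 1$ still give $|f_\lambda' f_\infty'| < \tfrac{1}{2}$. If $f_\infty$ is constant then $Z$ lies in one vertical fibre of $\mathcal E$ over a point of $S_n$, which is a Cantor set. If $f_\infty$ is nonconstant and $\gamma \subseteq Z$ is a continuum, then $\{(f_\infty(w),w):w\in\gamma\}$ is a continuum in $\mathcal E$ lying over the continuum $f_\infty(\gamma)$, which is contained in a small disk $V$ about $f_\infty(w_0^\infty)\in S_n$ free of branch points; over $V$ the set $\mathcal E$ is again a disjoint union of holomorphic graphs indexed by a Cantor set $\mathcal A$, and assigning to a point of $\mathcal E$ the index of the sheet through it is continuous (by compactness of $\mathcal A$ and disjointness of the sheets over $S_n$), so the continuum lies in a single sheet $\{w = f_{\lambda_0}(z)\}$; but then $\gamma$ lies in the fixed-point set of the contraction $w \mapsto f_{\lambda_0}(f_\infty(w))$ from the first step, hence is a single point. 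So $Z$ is totally disconnected, $c_\infty$ must be a point, and ${\rm diam}\,c_\infty \geq \epsilon_0$ is contradicted. This would prove the lemma.

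The main obstacle is this last step: one must marry the analytic transversality (each sheet of $\mathcal E$ meets such a disk at most once) with the topology of the Wermer set inherited from \cite{HST12} (a connected piece of $\mathcal E$ over a branch-point-free region lies in a single sheet, via the Cantor parametrisation of the sheets and their disjointness over $S_n$). The normal-families bookkeeping of the previous step — the varying disk centres, the degeneration $\mathcal E^\delta \to \mathcal E$ as $\delta\to0$, and the fact that Hausdorff limits of continua are continua — is routine but needs care.
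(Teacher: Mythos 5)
Your proposal is correct and follows essentially the same route as the paper: argue by contradiction, use Montel/normal families to extract a limit disk $D_\infty$ with $|f_\infty'|\leq 1$, pass the components $c_k$ to a Hausdorff limit continuum inside $\pi_w(\tfrac14 \overline{D}_\infty\cap\mathcal E)$, and conclude from the transversality between the almost-vertical limit disk and the almost-horizontal sheets of $\mathcal E$ (via $q_0$ and $\alpha(n)<\tfrac12$) that this intersection is totally disconnected. The paper states the transversality and Cantor-intersection step rather tersely, whereas you make it explicit through the injectivity of $w\mapsto w-f_\lambda(f(w))$ and the contraction fixed-point argument; this is a welcome elaboration, not a different method.
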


\begin{proof}
Assume, to get a contradiction, that there exists $n \geq q_0$, a sequence of positive real numbers $\delta_k$ decreasing to 0, a sequence of points $w_k \in \pi_w\left(\mathcal E \cap \left(T_{n} \times \mathbb C\right)\right)$, a sequence of holomorphic disks $D_k=\left\{(f_k(w),w),\ w \in \Delta_{1}(w_k)\}\right\} \in \mathcal H_n$ and, for every $k \geq 1$, a connected component $c_k$ of the closure of $\pi_w(\frac{1}{4} D_k \cap \mathcal E^{\delta_k})$, such that
\begin{equation}\label{inf-eq}
\inf_{k \geq 1}{\rm diam}(c_k) =:\alpha_{\infty} > 0.
\end{equation}
We can assume that for every $k \geq 1$,  $c_k$ is simply connected.

\vskip 0,2cm
Since $\mathcal E \cap \left(T_{n} \times \mathbb C\right)$ is compact, and since $D_k \in \mathcal H_n$ for every $k \geq 1$, it follows from Condition (2) and from the Montel Theorem that up to extracting a subsequence, $D_k$ will converge to a holomorphic disc $D_{\infty}:=\{(f_{\infty}(w),w),\ w \in \Delta_1(w_{\infty})\}$ for some point $w_{\infty} \in \pi_w\left(\mathcal E \cap \left(T_{n} \times \mathbb C\right)\right)$, where $w_k \to w_\infty$ as $k \to \infty$, $f_{\infty}$ is holomorphic on $\Delta_1(w_{\infty})$ and satisfies $\sup_{\Delta_1(w_\infty)}|f'_{\infty}| \leq 1$. In particular, the disk $D_{\infty}$ is ``almost'' vertical. Moreover, by the choice of $q_0$, the set $\mathcal E$ is ``almost'' horizontal. This implies that the disk $D_{\infty}$ is transversal to every branch of $\mathcal E$ and, hence, $\frac{1}{4} \overline{D}_{\infty}$ intersects $\mathcal E$ on a Cantor set. However, since $c_k$ is a compact set for every $k$, then, up to extracting a subsequence, $c_k$ converges in the Hausdorff metric to a subset of some connected component $c_{\infty}$ of the closure of $\pi_w(\frac{1}{4} D_{\infty} \cap \mathcal E)$ and then, by (\ref{inf-eq}), ${\rm diam}(c_{\infty}) \geq \alpha_{\infty}$. This is a contradiction.\end{proof}

For our next argument we need to define some notions. We will call a continuous curve $(z(t), w(t)) : [0, 1] \to {\mathbb C}^2_{z, w}$ a {\em lifting} to ${\mathbb C}^2_{z, w}$ of the curve $z(t) : [0, 1] \to {\mathbb C}_z$ (without restrictions of generality we can assume here that, up to a reparametrisation, if necessary, all the curves are parametrised by the segment $[0, 1]$). Let now $z(t) : [0, 1] \to {\mathbb C}_z$ be a closed (i.e. $z(0) = z(1)$) continuous curve. For a compact set $F$ in ${\mathbb C}^2_{z, w}$ which projects to the given curve $z(t)$ we consider the family $\{\gamma^F_{\alpha}(t)\}_{\alpha \in {\mathcal A}}$ of all liftings $\gamma^F_{\alpha}(t) = (z(t), w^F_{\alpha}(t))$ of the curve $z(t)$ which are contained in the set $F$ (i.e. such that $\gamma^F_{\alpha}(t) \in F$ for all $t \in [0 ,1]$). Then we define the $\it shift$ $\it error$ $\theta(F)$ of the set $F$ as 

\[\theta(F) := \inf_{\alpha \in {\mathcal A}}|w^F_{\alpha}(1) - w^F_{\alpha}(0)|.\]

\noindent
Observe that for two sets $F_1 \subset F_2$ which project to the same curve $z(t)$ one obviously has that

\begin{equation}\label{inf-eq2}
\theta(F_2) \leq \theta(F_1).
\end{equation}

Now we can finally make precise the construction of the Wermer type set $\mathcal E$, specifying conditions on the sequence $\{\varepsilon_n\}_{n \in \mathbb N}$. We first set $\varepsilon_1 = 1$ and set $E_1:=\{(z,w) \in \mathbb C^2:\ w = \varepsilon_1\sqrt{z-a_1}\}$. Then we will choose $\varepsilon_2$ as follows. Fix $0 < r_2 < 1/2$ such that the set $E_1 \cap (\Delta_{r_2}(a_2) \times \mathbb C)$ is the union of the graphs of holomorphic functions $f^1_1, f^1_2: \Delta_{r_2}(a_2) \rightarrow \mathbb C$ and such that, moreover, one has
$$
\kappa_2:=\inf\{|f^1_1(z)-f^1_2(z)|:\ |z-a_2| = r_2\} > 0.
$$
Now, let us choose $\varepsilon_2$ such that $2\varepsilon_2 \sqrt{r_2} < \kappa_2/2$. Then for each $(z,w) \in E_2:=\{(z,w) \in \mathbb C^2:\ w=\sum_{j=1}^2 \varepsilon_j\sqrt{z-a_j}\}$ with $|z-a_2| = r_2$ we consider $w' \in \mathbb C$ such that $(a_2 + (z-a_2)e^{2i\pi}, w') \in E_2$ and observe that
$$
|w-w'| = 2\varepsilon_2 \sqrt{r_2}.
$$
Here we denote by $(a_2 + (z-a_2)e^{2i\pi}, w') \in E_2$ a point which is obtained from $(z,w)$ after one turn around $a_2$ starting at $z$ and keeping it, during this turn, on the set $E_2$.

We can continue the process inductively. Assume that for some $k \geq 3$, $r_2,\dots, r_{k-1}$ and $\varepsilon_1,\dots,\varepsilon_{k-1}$ are already constructed.
We choose $0 < r_k < 1/2$ such that
$$
E_{k-1} \cap \left(\Delta_{r_k}(a_k) \times \mathbb C\right) = \cup_{j=1}^{2^{k-1}}\{(z,f^{k-1}_j(z)):\ z \in \Delta_{r_k}(a_k)\},
$$
where $f^{k-1}_1,\dots,f^{k-1}_{2^{k-1}}$ are functions holomorphic in a neighbourhood of $\overline{\Delta_{r_k}(a_k)}$ and such that for every $1 \leq j\neq l \leq 2^{k-1}$ and every $z \in \partial \Delta_{r_k}(a_k)$, $f^{k-1}_j(z) \neq f^{k-1}_l(z)$.
Then we set
\begin{equation}\label{ka-eq}
\kappa_k:= \inf_{1 \leq j \neq l \leq 2^{k-1}}\{|f^{k-1}_j(z)-f^{k-1}_l(z)|:\ |z-a_k| = r_k\} > 0.
\end{equation}
Let $\varepsilon_k > 0$ be such that
\begin{equation}\label{k-eq}
2 \varepsilon_k \sqrt{r_k} < \frac{\kappa_k}{2}
\end{equation}
and, for every $2 \leq p \leq k-1$,
\begin{equation}\label{p-eq}
2 \varepsilon_k\sqrt{|a_k-a_p|+r_{p}} < \frac{\varepsilon_p \sqrt{r_p}}{2^{k - p + 1}}.
\end{equation}
Condition~(\ref{p-eq}) will insure that the set $E_k$, over the circle $\partial \Delta_{r_p}(a_p)$, will be a sufficiently small perturbation of the set $E_p$ and, hence, the shift error for liftings of the closed circle $\partial \Delta_{r_p}(a_p)$ to a sufficiently small neighbourhood of the set $E_k$ will be bounded from below by a positive constant independent of $k$. Let us make this argument more precise. We first introduce the following notations: for each $k \geq p$ we denote by $E_{k,p}$ the set
$$
E_{k,p} := E_k \cap (\{|z-a_p|=r_p\} \times {\mathbb C}_w),
$$
for each $p \geq 1$ we denote by ${\mathcal E}_p$ the set
$$
{\mathcal E}_p := {\mathcal E} \cap (\{|z-a_p|=r_p\} \times {\mathbb C}_w),
$$
and for each compact set $F \subset {\mathbb C}^2_{z,w}$ and each $\delta > 0$ we denote by $F^\delta$ the set
$$
F^{\delta}:=\cup_{(z,w) \in F}\left(\{z\} \times \Delta_{\delta}(w)\right).
$$
Since, by Condition~(\ref{p-eq}), we know that for each $k > p$ one has
$$
|\varepsilon_k \sqrt{z-a_k}| \leq \varepsilon_k\sqrt{|a_k-a_p|+r_{p}} <  \frac{\varepsilon_p \sqrt{r_p}}{2^{k - p + 2}}
$$
for $z \in \partial \Delta_{r_p}(a_p)$, we conclude that
$$
E_{k,p} \subset E^\frac{\varepsilon_p \sqrt{r_p}}{2^{k - p + 2}}_{k-1,p} \subset E^{{\varepsilon_p \sqrt{r_p}}(\frac{1}{2^{k - p + 2}} + {\frac{1}{2^{k - p + 1}})}}_{k-2,p} \subset E^{{\varepsilon_p \sqrt{r_p}}(\frac{1}{2^{k - p + 2}} + {\frac{1}{2^{k - p + 1}} + \cdots + {\frac{1}{8}})}}_{p,p} \subset E^\frac{\varepsilon_p \sqrt{r_p}}{4}_{p,p}.
$$
Then, after passing to the limit as $k \to \infty$, we will get that
$$
{\mathcal E}_p \subset {\rm cl}\Big({E}^\frac{\varepsilon_p \sqrt{r_p}}{4}_{p,p} \Big),
$$
where for avoiding ambiguity we use the notation ${\rm cl}(X)$ for the closure of $X$. Hence, we also have that
$$
{\rm cl}\Big({\mathcal E}^\frac{\varepsilon_p \sqrt{r_p}}{4}_p \Big)   \subset {\rm cl}\Big({E}^\frac{\varepsilon_p \sqrt{r_p}}{2}_{p,p} \Big).
$$
It follows from Conditions (\ref{ka-eq}) and (\ref{k-eq}) that the shift error of the set $E_{p,p}$ is equal to $2 \varepsilon_p \sqrt{r_p}$. Hence, the shift error of the set $E^\frac{\varepsilon_p \sqrt{r_p}}{2}_{p,p}$ satisfies
$$
\theta \Big({\rm cl}\Big(E^\frac{\varepsilon_p \sqrt{r_p}}{2}_{p,p}\Big) \Big) \geq {\varepsilon_p \sqrt{r_p}}.
$$

By the construction of the Wermer type set $\mathcal E$, it finally follows from the last inclusion and from Property (\ref{inf-eq2}) that for the shift error of the set $\mathcal E$ we have the following
 
\vspace{2mm}
{\bf Property $(\mathcal P)$:} {\sl For every $p \geq 1$, the inequality
	$$
	\theta \Big({\rm cl}\Big({\mathcal E}^\frac{\varepsilon_p \sqrt{r_p}}{4}_p \Big) \Big) \geq {\varepsilon_p \sqrt{r_p}} > 0
	$$
holds.}

\vspace{2mm}
We can now specify the choice of the sequence $\{c(n)\}_n$ and then, using Lemma~\ref{conv-lem}, construct the function $\rho$.

\vspace{2mm}
For every $n \in \mathbb N$, let
$$
\tilde{S}_n:=\left\{z \in \mathbb C:\ -n-2  \leq {\rm Re}(z), {\rm Im}(z) \leq n + 2\right\} \backslash \left\{z \in \mathbb C:\ -n+1 < {\rm Re}(z), {\rm Im}(z) < n - 1\right\}.
$$
and let
$$
\kappa(n):=\inf_{\{p : a_p \in \tilde{S}_n\}}\left\{\frac{\varepsilon_p \sqrt{r_p}}{4}\right\}.
$$

\noindent
The definition of $\tilde{S}_n$ insures that every disc of radius one contained in $\mathbb C$ will be contained in some $\tilde{S}_n$. This property will be used in Section~\ref{main-sect} to prove the Kobayashi hyperbolicity of $\Omega_{\Psi(\rho)}$.

\vspace{2mm}
Now we choose $q(n)$ so large that
\begin{equation}\label{inclusion1-eq}
\{(z,w) \in \mathbb C^2:\ e^{\phi(z,w) + q(n)} < 1 \} \cap (\tilde{S}_n \times \mathbb C) \subset \mathcal E^{\kappa(n)} \cap(\tilde{S}_n \times \mathbb C),
\end{equation}
the function $\phi$ being introduced at the beginning of Subsection~\ref{wermer-subsection}.

\vspace{2mm}
Then, in view of Lemma~\ref{vert-lem}, we can define for every $n > q_0$
 $$
 \delta(n):=\inf\{\delta > 0:\ \beta_k^{\delta} < \frac{1}{2}, \ {\rm for} \ {\rm all} \ n-1 \leq k \leq n + 2\}.
 $$

\noindent 
It follows now that for every $n > q_0$ and every $n-1 \leq k \leq n + 2$ the inequality
 \begin{equation}\label{beta-eq}
 \beta_k^{\delta(n)} \leq \frac{1}{2}
 \end{equation}
 holds and then we choose $\tilde{q}(n) \geq q(n)$ such that
 \begin{equation}\label{inclusion2-eq}
\{(z,w) \in \mathbb C^2:\ e^{\phi(z,w) + \tilde{q}(n)} < 1 \} \cap (\tilde{S}_n \times \mathbb C) \subset \mathcal E^{\delta(n)} \cap (\tilde{S}_n \times \mathbb C).
\end{equation}

Hence, setting $c(n) = q(n) + n$ for $n < q_0$ and $c(n) = \tilde{q}(n) + n$ for $n \geq q_0$, and applying then Lemma~\ref{conv-lem}, we obtain a strictly convex function $\rho:[0,+\infty) \rightarrow [0,+\infty)$ such that for each $n \in \mathbb N$
\begin{equation}\label{conv-eq}
\rho_{|[n-1, n+2]} \geq c(n).
\end{equation}

\noindent
Then the corresponding function $\Psi(\rho)$ defined by (\ref{function-eq}) is plurisubharmonic on $\mathbb C^2$ and the domain $\Omega_{\Psi(\rho)}:=\{(z,w,\zeta) \in \mathbb C^3: {\rm Re}(\zeta) > \Psi(\rho)(z,w)\}$ is a rigid pseudoconvex domain in $\mathbb C^3$.
In Section~\ref{main-sect}, we prove that $\Omega_{\Psi(\rho)}$ satisfies the conditions of Theorem~\ref{main-thm}.

\section{Proofs of Theorem~\ref{main-thm} and Corollary~\ref{main-cor}}\label{main-sect}

\subsection{Proof of Theorem~\ref{main-thm}} We prove that the domain $\Omega_{\Psi(\rho)}$ satisfies all the conclusions of Theorem~\ref{main-thm}. According to Lemma~\ref{core-lem}, the core of $\Omega_{\Psi(\rho)}$ is not empty. Hence, it remains to prove that $\Omega_{\Psi(\rho)}$ is Kobayashi hyperbolic. Assume, to get a contradiction, that there is a point $p \in \Omega_{\Psi(\rho)}$, a neighbourhood $U$ of $p$ in $\mathbb C^3$, $U$ relatively compact in $\Omega_{\Psi(\rho)}$, and for every $k \in \mathbb N$, a holomorphic function $f_k=(z_k,w_k,\zeta_k): \Delta_k(0) \rightarrow \Omega_{\Psi(\rho)}$ such that $f_k(0) \in U$ and $\norm{f_k'(0)}^2:=|z_k'(0)|^2 + |w_k'(0)|^2 + |\zeta_k'(0)|^2 = 1$. Since $\Psi(\rho)$ is nonnegative, it follows that the function ${\rm Re}(\zeta_k)$ is positive on $\Delta_k(0)$ i.e., $\zeta_k(\Delta_k(0)) \subset \mathbb H:=\{\lambda \in \mathbb C: {\rm Re}(\lambda) > 0\}$. For $v \in \mathbb C$ and $\zeta \in \mathbb H$ (resp. $\eta \in \Delta_k(0)$), let $\norm{v}_{\zeta,\mathbb H}$ (respectively $\norm{v}_{\eta,\Delta_k(0)}$) denote the hyperbolic norm of $v$ at $\zeta \in \mathbb H$ (respectively at $\eta \in \Delta_k(0)$). From the decreasing property of the hyperbolic metric (under the action of holomorphic maps) we get
$$
\frac{|\zeta_k'(0)|}{{\rm Re}(\zeta_k(0))} = \norm{\zeta_k'(0) \cdot {\bf 1}}_{\zeta_k(0),\mathbb H} \leq \norm{{\bf 1}}_{0,\Delta_k(0)} = \frac{1}{k}.
$$
Hence, for every $k \geq 1$
\begin{equation}\label{harn-eq}
|\zeta_k'(0)| \leq \frac{|\zeta_k(0)|}{k}.
\end{equation}
Since $U$ is relatively compact in $\Omega_{\Psi(\rho)}$, the set $\{|\zeta_k(0)|,\ k \in \mathbb N\}$ is bounded in $\mathbb C$ and, from (\ref{harn-eq}), there exists $k_0 \geq 0$ such that $|z_k'(0)|^2 + |w_k'(0)|^2 > \frac{1}{2}$ for every $k \geq k_0$. If we set $r_k:=\sqrt{|z_k'(0)|^2 + |w_k'(0)|^2}$, then the holomorphic map $g_k=(\tilde{z}_k,\tilde{w}_k,\tilde{\zeta}_k): \lambda \in \Delta_{kr_k}(0) \mapsto f_k(\lambda/r_k)$ satisfies
$$
|\tilde{z}_k'(0)|^2 + |\tilde{w}_k'(0)|^2 = 1.
$$
It follows now from Lemma~\ref{loc-lem} that, setting $d:=\sup_{(z,w,\zeta) \in U}{\rm Re}(\zeta)$, we will have the following inclusion

\begin{equation}\label{inclu-eq}
\left\{(\tilde{z}_k(\lambda),\tilde{w}_k(\lambda)),\ \lambda \in \Delta_{k/3}\sqrt{2}(0)\right\} \subset F_d:=\left\{(z,w) \in \mathbb C^2:\ \Psi(\rho)(z,w) < 2d\right\}.
\end{equation}
Notice that Condition (\ref{inclu-eq}) is satisfied for every $\lambda \in \Delta_{kr_k/3}$ and hence, since $r_k > 1/\sqrt{2}$, for every $\lambda \in \Delta_{k/3\sqrt{2}}$.
 
\vspace{2mm}
We distinguish two cases.

\vspace{2mm}
\noindent{{\bf Case 1.} \sl There exists an increasing sequence $\{k_m\}_{m \in \mathbb N}$ diverging to $+\infty$, such that for every $m \in \mathbb N$, $|\tilde{z}_{k_m}'(0)| > |\tilde{w}_{k_m}'(0)|$.}

\vspace{2mm}
We will now need the following classical result.

\vspace{1mm}
\noindent{\bf The Bloch Theorem.} {\sl There exists $0 < \theta < 1$ such that for every $r>0$ and every holomorphic function $f: \Delta_r(0) \rightarrow \mathbb C$ with $|f'(0)| = 1$ there are $b \in \mathbb C$ and a holomorphic function $g:\Delta_{{\theta}r}(b) \rightarrow \mathbb C$ such that $f \circ g (\lambda) = \lambda$ for every $\lambda \in \Delta_{{\theta}r}(b)$.}

\vspace{1mm}
Since for every $m \in \mathbb N$ we have $\frac{1}{\sqrt{2}} < |\tilde{z}_{k_m}'(0)| \leq 1$, the function $\lambda \in \Delta_{k_m/6}(0) \mapsto\tilde{z}_{k_m}(\lambda/|\tilde{z}'_{k_m}(0)|)$ satisfies the assumptions of the Bloch Theorem and takes values in $\tilde{z}_{k_m}(\Delta_{k_m/3\sqrt{2}}(0))$. It follows that there exists $0 < \theta < 1$ such that for every $m \in \mathbb N$, there are $b_{k_m} \in \mathbb C$ and a holomorphic function $g_{k_m}:\Delta_{{\theta} k_m/6}(b_{k_m}) \rightarrow \mathbb C$ whose graph $\Gamma(g_{k_m}):=\{(z,g_{k_m}(z)):\ z \in \Delta_{{\theta} k_m/6}(b_{k_m})\}$ will satisfy the condition
\begin{equation}\label{incl2-eq}
\Gamma(g_{k_m}) \subset F_d.
\end{equation}
Now, since $\Psi(\rho)(z,w):=e^{\phi(z,w) + \rho(|Re(z)|) + \rho(|Im(z)|)}$, it follows from Condition~(\ref{inclusion1-eq}) and from the definition of $c(n)$ in Condition~(\ref{conv-eq}) that for every positive integer $n$ such that $\frac{2d}{e^n} < 1$ we have

$$
\begin{array}{lll}
F_d \cap \left(\tilde{S}_n \times {\mathbb C}\right) & = &  \left\{(z,w) \in \mathbb C^2:\ e^{\phi(z,w) + \rho(|x|) + \rho(|y|)} < 2d \right\} \cap \left(\tilde{S}_n \times {\mathbb C}\right)\\
 & & \\
 & \subset & \left\{(z,w) \in \mathbb C^2:\ e^{\phi(z,w) + q(n)} < \frac{2d}{e^n} \right\} \cap \left(\tilde{S}_n \times {\mathbb C}\right) \\
 & & \\
 & \subset & \left\{(z,w) \in \mathbb C^2:\ e^{\phi(z,w) + q(n)} < 1 \right\} \cap \left(\tilde{S}_n \times {\mathbb C}\right) \\
 & & \\
 & \subset & \mathcal E^{\kappa(n)} \cap \left(\tilde{S}_n \times {\mathbb C}\right),
 \end{array}
$$
the last inclusion coming from Condition (\ref{inclusion1-eq}).

\vspace{2mm}
In particular, for every $m \in \mathbb N$,
\begin{equation}\label{contain-eq}
\Gamma(g_{k_m}) \subset \cup_{n \geq n_0} \left(\mathcal E^{\kappa(n)} \cap \left(\tilde{S}_n \times {\mathbb C}\right) \right) \cup K,
\end{equation}
where $n_0$ satisfies $\frac{2d}{e^{n_0}} < 1$ and $K:=F_d \cap\left(\{|z| \leq n_0\} \times \mathbb C\right)$.

\vspace{2mm}
It follows now from the definition of $\kappa(n)$ and Property ($\mathcal P$) that the set
$$
\cup_{n \geq n_0} \left(\mathcal E^{\kappa(n)} \cap \left(\tilde{S}_n \times {\mathbb C}\right) \right)
$$
cannot contain large disks. Here by ``large'' we mean a holomorphic disk of the form $\{(z,w) \in {\mathbb C}^2: w = g(z), z \in \mathcal D\}$ such that its domain of definition $\mathcal D$ contains the disk $\Delta_{\frac{1}{2}}(a_k)$ for some $k \geq n_0$. This contradicts~(\ref{contain-eq}), since  for sufficiently large $m\,\,$ the set $\Gamma(g_{k_m}) \backslash K$ will obviously contain an arbitrarily large disk.

\vspace{2mm}
\noindent{{\bf Case 2.} \sl There exists $k_0 \geq 1$ such that for every $k \geq k_0$, $|\tilde{z}_{k}'(0)| \leq |\tilde{w}_{k}'(0)|$.}

\vspace{1mm}
In particular, we have $|\tilde{w}_{k}'(0)| \geq \frac{1}{\sqrt{2}}$ for every $k \geq k_0$ and, as in the Case 1 above, according to the Bloch Theorem, there exists $0 < \theta < 1$ such that for every $k \geq k_0$, there are $b'_k \in \mathbb C$ and holomorphic functions $h_k:\Delta_{{\theta} k/6}(b'_k) \rightarrow \mathbb C$ whose graph $\Gamma(h_k):=\{(h_k(\lambda),\lambda):\ \lambda \in \Delta_{{\theta} k/6}(b'_k)\}$ satisfies
\begin{equation}\label{incl3-eq}
\Gamma(h_k) \subset F_d.
\end{equation}
 There are two subcases to consider.
 
 \vspace{2mm}
 \noindent{{\bf Subcase 2a.} \sl There is an increasing sequence $(k_m)_{m \in \mathbb N}$ diverging to $+\infty$ and, for every $m \in \mathbb N$, a point $\lambda_{k_m} \in \Delta_{{\theta} k_m/12}(b'_{k_m})$ such that $|h'_{k_m}(\lambda_{k_m})| \geq 1$.}
 
 \vspace{1mm}
 In this case we can repeat the argument of Case 1, replacing $\tilde{z}_{k_m}$ with $h_{k_m}$, $\Delta_{k_m/3\sqrt{2}}(0)$ with $\Delta_{{\theta} k_m/12}(\lambda_{k_m})$ (here we use a trivial observation that for $\lambda_{k_m} \in \Delta_{{\theta} k_m/12}(b'_{k_m})$ one has $\Delta_{{\theta} k_m/12}(\lambda_{k_m}) \subset \Delta_{{\theta} k_m/6}(b'_{k_m})$) and then, using (\ref{incl3-eq}), we obtain the same contradiction as in Case 1.
 
 \vspace{2mm}
 \noindent{{\bf Subcase 2b.} \sl For every $k \in \mathbb N$ large enough and every $\lambda \in \Delta_{{\theta} k/12}(b'_k)$ the inequality $|h'_k(\lambda)| < 1$ holds.}
 
 \vspace{1mm}
 It follows then from Condition~(\ref{conv-eq}), from the definition of $c(n)$ and from Condition~(\ref{inclusion2-eq}) that for every $n \geq q_0$ such that $\frac{2d}{e^n} < 1$ one has
\begin{equation}\label{ineq-eq}
F_d \cap \left(\tilde{S}_n\times \mathbb C\right) \subset  \mathcal E^{\delta(n)} \cap \left(\tilde{S}_n\times \mathbb C\right).
\end{equation} 
 Hence, for every $n \geq q_0$  there exists a compact set $K_n \subset \mathbb C^2$ such that
 $$
 F_d \cap \left(\left\{z \in \mathbb C:\ -n-\frac{1}{2} < {\rm Re}(z) < n +\frac{1}{2},\  -n-\frac{1}{2} < {\rm Im}(z) < n + \frac{1}{2}\right\} \times \mathbb C\right) \subset K_n.
 $$
 
 (See Figure~\ref{Fig4}.)

 \begin{figure}[h!]
   %\documentclass{minimal}
%\usepackage{tikz}
%\usetikzlibrary{calc}

%\begin{document}

\begin{center}
\begin{tikzpicture}[scale=1.2]
%\draw [domain=-1.8:1.8] plot (\x,{(\x)^2});
%\draw [dotted] [domain= -2:-1.8] plot (\x,{(\x)^2});
%\draw [dotted] [domain= 1.8:2] plot (\x,{(\x)^2});

%\draw (-1.3,-1.3) -- (1.3,-1.3);
%\draw (1.3,-1.3) -- (1.3,1.3);
%\draw (1.3,1.3) -- (-1.3,1.3);
%\draw (-1.3,1.3) -- (-1.3,-1.3);

%\draw[color=black,fill=black!10] (-1.3,-1.3) -- (1.3,-1.3) -- (1.3,1.3) -- (-1.3,1.3) -- cycle;

%\draw[color=black,fill=white] (-0.7,-0.7) -- (0.7,-0.7) -- (0.7,0.7) -- (-0.7,0.7) -- cycle;

%\draw[ultra thick][dashed] (-1,-1) -- (1,-1) -- (1,1) -- (-1,1) -- cycle;

\draw[->] [dashed] (0,-3) -- (0,3) ;
\draw[->] [dashed] (-3.7,0) -- (3.7,0) ;

%\shorthandoff{:}\begin{tikzpicture}
%\draw[domain=0:4,samples=100,color=black][dotted] plot ({\x},{sqrt(\x/3)});
%\draw[domain=0:4,samples=100,color=black][dotted] plot ({\x},-{sqrt(\x/3)});
%\draw[domain=-4:0,samples=100,color=black][dotted] plot ({\x},{sqrt(-\x/3)});
%\draw[domain=-4:0,samples=100,color=black][dotted] plot ({\x},-{sqrt(-\x/3)});

%-------------------------------------------------------------------

\draw[domain=2:4,samples=100,color=black] plot ({\x},{sqrt(\x/3)+sqrt(\x-2)/6});
\draw[domain=2:4,samples=100,color=black, dotted] plot ({\x},{sqrt(\x/3)+sqrt(\x-2)/6+0.1});

\draw[ultra thick][domain=2.8:3.85,samples=100,color=black] plot ({\x},{sqrt(\x/3)+sqrt(\x-2)/6+(\x-3.1)^3/4});]
\draw (3.3,2.7) node[scale=1] {$\Gamma(h_{k(n)})$};
\draw[->, dashed] (3.3,2.5) -- (3.3,{sqrt(3.3/3)+sqrt(3-2)/6+(3.3-3.1)^3/3+0.03});

\draw[domain=2.4:4,samples=100,color=black, dotted] plot ({\x},{sqrt(\x/3)+sqrt(\x-2)/6-0.1});

\draw (3.9,{sqrt(3.9/3)+sqrt(3.9-2)/6-0.1}) -- (3.9,{sqrt(3.9/3)+sqrt(3.9-2)/6});

\draw[dashed] (4.3,1.3) -- (3.9,{(sqrt(3.9/3)+sqrt(3.9-2)/6-0.1+sqrt(3.9/3)+sqrt(3.9-2)/6)/2});
\draw (4.55,1.3) node[scale=0.5] {$\delta(n)$};

\draw[domain=2:4,samples=100,color=black, dotted] plot ({\x},{sqrt(\x/3)+sqrt(\x-2)/6-0.2});

\draw[domain=2:4,samples=100,color=black] plot ({\x},{sqrt(\x/3)-sqrt(\x-2)/6});
\draw[domain=2:4,samples=100,color=black, dotted] plot ({\x},{sqrt(\x/3)-sqrt(\x-2)/6-0.1});
\draw[domain=2.4:4,samples=100,color=black, dotted] plot ({\x},{sqrt(\x/3)-sqrt(\x-2)/6+0.1});
%\draw[domain=2:4,samples=100,color=black, dotted] plot ({\x},{sqrt(\x/3)+sqrt(\x-2)/6+0.2});

%\draw[domain=2:4,samples=100,color=black, dotted] plot ({\x},{sqrt(\x/3)-sqrt(\x-2)/6});
%\draw[domain=2.4:4,samples=100,color=black, dotted] plot ({\x},{sqrt(\x/3)-sqrt(\x-2)/6+0.1});

\draw[domain=0:2,samples=100,color=black] plot ({\x},{sqrt(\x/3)+sqrt(-\x+2)/6});
\draw[domain=0:2,samples=100,color=black, dotted] plot ({\x},{sqrt(\x/3)+sqrt(-\x+2)/6+0.1});

\draw[domain=0:2,samples=100,color=black] plot ({\x},{sqrt(\x/3)-sqrt(-\x+2)/6});
\draw[domain=0.3:2,samples=100,color=black, dotted] plot ({\x},{sqrt(\x/3)-sqrt(-\x+2)/6-0.1});

\draw[domain=0:0.1, samples=100, color=black, dotted] plot ({\x+0.1},{sqrt(\x/3)+sqrt(-\x+2)/6-0.03});
\draw[domain=0:0.07, samples=100, color=black, dotted] plot ({\x+0.1},{-sqrt(\x/3)+sqrt(-\x+2)/6+0.03});

\draw[domain=0.15:1.7, samples=100, color=black, dotted] plot ({\x},{sqrt(\x/3)+sqrt(-\x+2)/6-0.1});
\draw[domain=0.15:1.7, samples=100, color=black, dotted] plot ({\x},{sqrt(\x/3)-sqrt(-\x+2)/6+0.1});

\draw[domain=0:0.07, samples=100, color=black, dotted] plot ({\x+0.1},{sqrt(\x/3)-sqrt(-\x+2)/6-0.03});
\draw[domain=0:0.1, samples=100, color=black, dotted] plot ({\x+0.1},{-sqrt(\x/3)-sqrt(-\x+2)/6+0.03});
%\draw[domain=0.2:1.7,samples=100,color=black, dotted] plot ({\x},{sqrt(\x/3)+sqrt(-\x+2)/6-0.1});

\draw[domain=0.15:1.7, samples=100, color=black, dotted] plot ({\x},{-sqrt(\x/3)+sqrt(-\x+2)/6-0.1});
\draw[domain=0.15:1.7, samples=100, color=black, dotted] plot ({\x},{-sqrt(\x/3)-sqrt(-\x+2)/6+0.1});

%\draw[domain=0:2,samples=100,color=black, dotted] plot ({\x},{sqrt(\x/3)-sqrt(\x-2)/6-0.1});
%\draw[domain=0:2,samples=100,color=black, dotted] plot ({\x},{sqrt(\x/3)+sqrt(\x-2)/6+0.1});
%\draw[domain=2.4:4,samples=100,color=black, dotted] plot ({\x},{sqrt(\x/3)+sqrt(\x-2)/6-0.1});

\draw[domain=-4:0,samples=100,color=black] plot ({\x},{sqrt(-\x/3)+sqrt(-\x+2)/6});
\draw[domain=-4:0,samples=100,color=black, dotted] plot ({\x},{sqrt(-\x/3)+sqrt(-\x+2)/6+0.1});
\draw[domain=-4:-0.2,samples=100,color=black, dotted] plot ({\x},{sqrt(-\x/3)+sqrt(-\x+2)/6-0.1});

\draw[domain=-0.1:0, samples=100, color=black, dotted] plot ({\x-0.1},{sqrt(-\x/3)+sqrt(-\x+2)/6-0.03});
\draw[domain=-0.1:0, samples=100, color=black, dotted] plot ({\x-0.1},{-sqrt(-\x/3)+sqrt(-\x+2)/6+0.03});

\draw[domain=-0.1:0, samples=100, color=black, dotted] plot ({\x-0.1},{sqrt(-\x/3)-sqrt(-\x+2)/6-0.03});
\draw[domain=-0.1:0, samples=100, color=black, dotted] plot ({\x-0.1},{-sqrt(-\x/3)-sqrt(-\x+2)/6+0.03});

\draw[domain=-4:0,samples=100,color=black] plot ({\x},{sqrt(-\x/3)-sqrt(-\x+2)/6});
\draw[domain=-4:-0.2,samples=100,color=black, dotted] plot ({\x},{sqrt(-\x/3)-sqrt(-\x+2)/6+0.1});

\draw[domain=-4:-0.4,samples=100,color=black, dotted] plot ({\x},{sqrt(-\x/3)-sqrt(-\x+2)/6-0.1});

%-------------------------------------------------------------------

\draw[domain=2:4,samples=100,color=black] plot ({\x},{-sqrt(\x/3)+sqrt(\x-2)/6});
\draw[domain=2:4,samples=100,color=black, dotted] plot ({\x},{-sqrt(\x/3)+sqrt(\x-2)/6+0.1});
\draw[domain=2.4:4,samples=100,color=black, dotted] plot ({\x},{-sqrt(\x/3)+sqrt(\x-2)/6-0.1});
%\draw[domain=2:4,samples=100,color=black, dotted] plot ({\x},{sqrt(\x/3)+sqrt(\x-2)/6+0.2});

\draw[domain=2:4,samples=100,color=black] plot ({\x},{-sqrt(\x/3)-sqrt(\x-2)/6});
\draw[domain=2:4,samples=100,color=black, dotted] plot ({\x},{-sqrt(\x/3)-sqrt(\x-2)/6-0.1});
\draw[domain=2.4:4,samples=100,color=black, dotted] plot ({\x},{-sqrt(\x/3)-sqrt(\x-2)/6+0.1});
%\draw[domain=2:4,samples=100,color=black, dotted] plot ({\x},{sqrt(\x/3)+sqrt(\x-2)/6+0.2});

\draw[domain=0:2,samples=100,color=black] plot ({\x},{-sqrt(\x/3)+sqrt(-\x+2)/6});
\draw[domain=0.3:2,samples=100,color=black, dotted] plot ({\x},{-sqrt(\x/3)+sqrt(-\x+2)/6+0.1});

\draw[domain=0:2,samples=100,color=black] plot ({\x},{-sqrt(\x/3)-sqrt(-\x+2)/6});
\draw[domain=0:2,samples=100,color=black, dotted] plot ({\x},{-sqrt(\x/3)-sqrt(-\x+2)/6-0.1});

\draw[domain=-4:0,samples=100,color=black] plot ({\x},{-sqrt(-\x/3)+sqrt(-\x+2)/6});
\draw[domain=-4:-0.4,samples=100,color=black, dotted] plot ({\x},{-sqrt(-\x/3)+sqrt(-\x+2)/6+0.1});
%\draw[domain=-4:-0.2,samples=100,color=black, dotted] plot ({\x},{-sqrt(-\x/3)+sqrt(-\x+2)/6-0.1});

\draw[domain=-4:0,samples=100,color=black] plot ({\x},{-sqrt(-\x/3)-sqrt(-\x+2)/6});
\draw[domain=-4:0,samples=100,color=black, dotted] plot ({\x},{-sqrt(-\x/3)-sqrt(-\x+2)/6-0.1});
\draw[domain=-4:-0.2,samples=100,color=black, dotted] plot ({\x},{-sqrt(-\x/3)-sqrt(-\x+2)/6+0.1});

%-------------------------------------------------------------------

%\draw (-5.2,0.5) node[scale=1] {$E_2$};
%\draw[->, dashed] (-5,0.6) -- (-3.7,{sqrt(3.7/3)+sqrt(3.7+2)/6});
%\draw[->, dashed] (-6.7,0.05) -- (-3.7,{sqrt(3.7/3)-sqrt(3.7+2)/6});
%\draw[->, dashed] (-6.7,-0.05) -- (-3.7,-{sqrt(3.7/3)+sqrt(3.7+2)/6});
%\draw[->, dashed] (-6.7,-0.2) -- (-3.7,-{sqrt(3.7/3)-sqrt(3.7+2)/6});

%\draw (-5.2,-0.5) node[scale=1] {$E_1$};
%\draw[->, dashed] (-4.7,0.1) -- (-2,{sqrt(2/3)});
%\draw[->, dashed] (-5,-0.6) -- (-3.7,-{sqrt(3.7/3)});

%-------------------------------------------------------------------

\draw (0,0) node[scale=0.5] {$\bullet$};
\draw (0.1,-0.15) node[scale=0.5] {$0$};

\draw (0.4,0) node[scale=0.5] {$\bullet$};
\draw (0.5,-0.1) node[scale=0.5] {$1$};
\draw (0.8,0) node[scale=0.5] {$\bullet$};

\draw (-0.4,0) node[scale=0.5] {$\bullet$};
\draw (-0.3,-0.15) node[scale=0.5] {$-1$};
\draw (-0.8,0) node[scale=0.5] {$\bullet$};

%\draw (0,0.4) node[scale=0.5] {$\bullet$};
%\draw (0.1,0.32) node[scale=0.5] {$1$};
%\draw (0,-0.4) node[scale=0.5] {$\bullet$};
%\draw (0.2,-0.4) node[scale=0.5] {$-1$};

\draw (1.9,0) node[scale=0.5] {$\bullet$};
\draw (1.9,-0.15) node[scale=0.5] {$n$};
\draw (1.5,0) node[scale=0.5] {$\bullet$};
\draw (1.5,-0.15) node[scale=0.5] {$n-1$};

\draw (-1.9,0) node[scale=0.5] {$\bullet$};
\draw (-1.95,-0.15) node[scale=0.5] {$-n$};
\draw (-1.5,0) node[scale=0.5] {$\bullet$};
\draw (-1.43,-0.15) node[scale=0.5] {$-n+1$};

\draw (2.3,0) node[scale=0.5] {$\bullet$};
\draw (2.35,-0.15) node[scale=0.5] {$n+1$};

\draw (-2.3,0) node[scale=0.5] {$\bullet$};
\draw (-2.45,-0.15) node[scale=0.5] {$-n-1$};

\draw[dotted, line width=1pt] (-2.1,-1.4) -- (2.1,-1.4) -- (2.1,1.4) -- (-2.1,1.4) -- cycle;

\draw[->][dashed] (-1,2) -- (-1,1.2);
\draw (-1,2.2) node[scale=1] {$K_n$};

\end{tikzpicture}
\end{center}
  \caption{}\label{Fig4}
 \end{figure}

Since $\left\{\Gamma\left(h_k\bigr|_{\Delta_{{\theta} k/12}(b'_k)}\right)\right\}_{k \geq 1}$ forms a sequence of unbounded holomorphic disks, the set $\cup_{k \geq 1}\left(\Gamma\left(h_k\bigr|_{\Delta_{{\theta} k/12}(b'_k)}\right) \cap (\mathbb C^2 \setminus K_n)\right)$ will also contain arbitrarily large discs. Hence, by (\ref{incl3-eq}), (\ref{ineq-eq}) and the definition of $\mathcal E^{\delta(n)}$, the set $\cup_{k \geq 1} \pi_z\left(\Gamma\left(h_k\bigr|_{\Delta_{{\theta} k/12}(b'_k)}\right) \cap (\mathbb C^2 \setminus K_n)\right)$ is not bounded in $\mathbb C_z$.

In particular, we can choose $n \geq q_0$ such that (\ref{ineq-eq}) is satisfied, and $k(n) \geq 1$, $b''_{k(n)} \in \Delta_{({\theta} k(n)/12)-1}(b'_{k(n)})$, such that $h_{k(n)}(b''_{k(n)}) \in T_n$, where $T_n$ is defined in (\ref{tn-eq}). Notice that, according to the assumption of Subcase 2b, the holomorphic disk $\{(h_{k(n)}(\lambda), \lambda); \lambda \in \Delta_1(b''_{k(n)})\}$ belongs to $\mathcal H_n$.
 
 Since $\Delta_1(b''_{k(n)}) \subset \tilde{S}_n$, it follows from (\ref{incl3-eq}) and (\ref{ineq-eq}) that
 $$
 \Gamma\left(h_{k(n)}\bigr|_{\Delta_{1}(b''_{k(n)})}\right) \subset \mathcal E^{\delta(n)} \cap \left(\tilde{S}_n\times \mathbb C\right).
 $$
However, ${\rm diam}\left(\pi_w\left(\Gamma\left(h_{k(n)}\bigr|_{\Delta_{1}(b''_{k(n)})}\right)\right)\right) = 1$, which contradicts Condition~(\ref{beta-eq}).
This completes the proof of Theorem~\ref{main-thm}. \qed

\vskip 0,3cm
\noindent{\bf {Remark 3.}} Observe, that the statement which is actually proved in the Case 1 and Case 2 above using the Bloch Theorem, can be formulated as the following property of the domain $F_d$:

\vspace{2mm}
{\bf Property $(\mathcal F)$:} {\sl For each $d > 0$ there exists $r = r(d) > 0$ such that the domain $F_d$ contains no holomorphic disks of radius $r > r(d)$ $\rm$(the last part of the statement means, more precisely, that for every holomorphic map $h : \Delta_r(0) \to F_d$ such that $\norm{h'(0)} = 1$ one has $r \leq r(d)$$\rm)$.}

\vspace{2mm}
We give here an explicit formulation of this property because it will be needed in the forthcoming paper \cite{SZ19}.

\subsection{Proof of Corollary~\ref{main-cor}} Assume, to get a contradiction, that $\varphi$ is a strong antipeak function at infinity for $\Omega_{\Psi(\rho)}$. Then $\varphi\bigr|_{\mathcal E}$ is continuous psh. and bounded from above in a neighbourhood of $\mathcal E$. It follows now from the same argument as in ~\cite[Theorem~2.2]{HST18} that $\varphi \equiv C$ on ${\mathcal E}$ for some $C \in \mathbb R$. Since, by the definition of a strong antipeak function, we know that $\varphi(z) \to 0$ for $z \in \mathcal E$ as $\norm{z} \rightarrow \infty$, we conclude that $\varphi \equiv 0$ on $\mathcal E$. This contradicts the definition of an antipeak function and, hence, completes the proof of Corollary~\ref{main-cor}. \qed

%\vspace{3mm}
%\noindent
%{\bf Acknowledgements.} {\it Part of this work was done while the second author was a visitor at the Capital Normal University (Beijing). It is his pleasure to thank this institution for its hospitality and good working conditions. The authors also would like to thank Fusheng Deng for his remark related to the definition of the antipeak function which slightly strengthen the statement of Theorem 1.}

\end{document}